\newmdtheoremenv{lemma}{Lemma}
\newmdtheoremenv{satz}[lemma]{Satz}
\newmdtheoremenv{thm}[lemma]{Theorem}
\newmdtheoremenv{kor}[lemma]{Korollar}
\newmdtheoremenv{cor}[lemma]{Corollary}
\newmdtheoremenv{bsp}[lemma]{Beispiel}
\newmdtheoremenv{exa}[lemma]{Example}
\newmdtheoremenv{prop}[lemma]{Proposition}
\newtheorem*{A}{Lemma A}
\newtheorem*{B}{Lemma B}
\newtheorem*{C}{Lemma C}
\newtheorem*{1b}{Theorem 1b}
\theoremstyle{remark}
\theoremstyle{definition}
\definecolor{champagne}{rgb}{0.97, 0.91, 0.81}
\definecolor{apricot}{rgb}{1, 0.93, 0.89}
\definecolor{bubbles}{rgb}{0.91, 1.0, 1.0}
\definecolor{lavenderblush}{rgb}{1.0, 0.94, 0.96}
\definecolor{pastelyellow}{rgb}{0.93, 0.99, 0.82}
\definecolor{sun}{rgb}{0.99, 0.97, 0.74}
\begin{document}
\title{The bunkbed conjecture still holds for cactus graphs\\
and for graphs with certain biconnected components}
\author{Robin Denart\footnote{Universität Paderborn, email: robin@denart.de}}
\date{June 2025}
\maketitle
\begin{abstract}
Recently, the bunkbed conjecture has been shown to be false, which naturally prompts questions on how to classify the graphs that still satisfy the conjecture. We distinguish between a weak version of the bunkbed conjecture where all the horizontal edges of the bunkbed graph are present with the same probability and a strong version of the conjecture where the edge weights on the underlying graph may be assigned individually. We show that any given graph satisfies either version of the conjecture if and only if all of its biconnected components do. Moreover, we show that all cactus graphs satisfy the strong version, and by combining previous results of other authors, any graph \(G\) such that every biconnected component of \(G\) is either a cycle, complete, complete bipartite, symmetric complete \(k\)-partite or an edge difference of a complete graph and a complete subgraph satisfies the weak version. Furthermore, we apply the aforementioned results to show that any counterexample to the strong version of the bunkbed conjecture contains a non-trivial subdivision of the diamond graph as a minor and demonstrate how this result might be strengthened in the future.
\end{abstract}\vspace{20pt}
\section{Introduction and main results}
Throughout this paper, the word "graph"\text{ }refers to simple graphs \(G = (V,E)\) consisting of a finite vertex set \(V(G) \vcentcolon= V\) and an edge set \(E(G) \vcentcolon= E\). Formally, we view an edge between two vertices \(v,w\in V(G)\) as a set containing exactly those two vertices, denoted by \(vw \vcentcolon= \{v,w\}\). Given two vertices \(v,w\in V(G)\), we let \(v \overset{G}{\longleftrightarrow} w\) denote the statement that there exists a path between \(v\) and \(w\) in \(G\). In this case, there obviously exists such a path that is self-avoiding. Similarly, for \(U \subseteq V(G)\), we write \text{ }\(U \overset{G}{\longleftrightarrow} w \text{ }\vcentcolon \Leftrightarrow \text{ }\exists u\in U\): \(u \overset{G}{\longleftrightarrow} w\).\vspace{6pt}\\
Given a graph \(G\), a vertex \(v\in V(G)\), an edge \(e = xy\in E(G)\) and \(i \in \{0,1\}\), we let \(v_i \vcentcolon= (v,i)\) and \(e_i \vcentcolon= x_iy_i\). We are now interested in the \emph{bunkbed graph} \(B(G)\) of \(G\), consisting of the vertex set \(V(B(G)) \vcentcolon= V(G) \times \{0,1\} = \{v_i \vert v\in V(G), i\in\{0,1\}\}\) and the edge set \(E(B(G)) \vcentcolon= \{e_i \vert e\in E(G), i\in\{0,1\}\} \cup \{v_0v_1 \vert v\in V(G)\}\). The edges \(e_i\) are called \emph{horizontal edges} of \(B(G)\) and the edges \(v_0v_1\) are called \emph{vertical edges} or \emph{posts} of \(B(G)\).\vspace{6pt}\\
Intuitively, one might consider vertices \(v_0, w_1 \in V(B(G))\) from different bunks to be "further apart" than \(v_0\) and \(w_0\) (and as far as the distance metric on connected graphs is concerned, this is obviously true). This is the motivation behind considering a specific Bernoulli bond percolation on \(B(G)\) and comparing the corresponding connection probabilities:\vspace{10pt}\pagebreak\\
Given a set \(T \subseteq V(G)\) of vertices and some \emph{percolation vector} \(\vec{p} = (p_e)_{e\in E(G)} \in [0,1]^{E(G)}\) of edge weights on \(G\), we consider a random subgraph \(B^T \vcentcolon= B^T(G) \vcentcolon= B^T(G,\vec{p})\) of \(B(G)\) in which all vertices of \(B(G)\) are present, the horizontal edges \(e_i\) of \(B(G)\) are randomly present independently of each other with probability \(p_e\) and every post \(v_0v_1\) of \(B(G)\) is deterministically present if and only if \(v \in T\).\vspace{6pt}
\begin{center}
\includegraphics[scale=0.35]{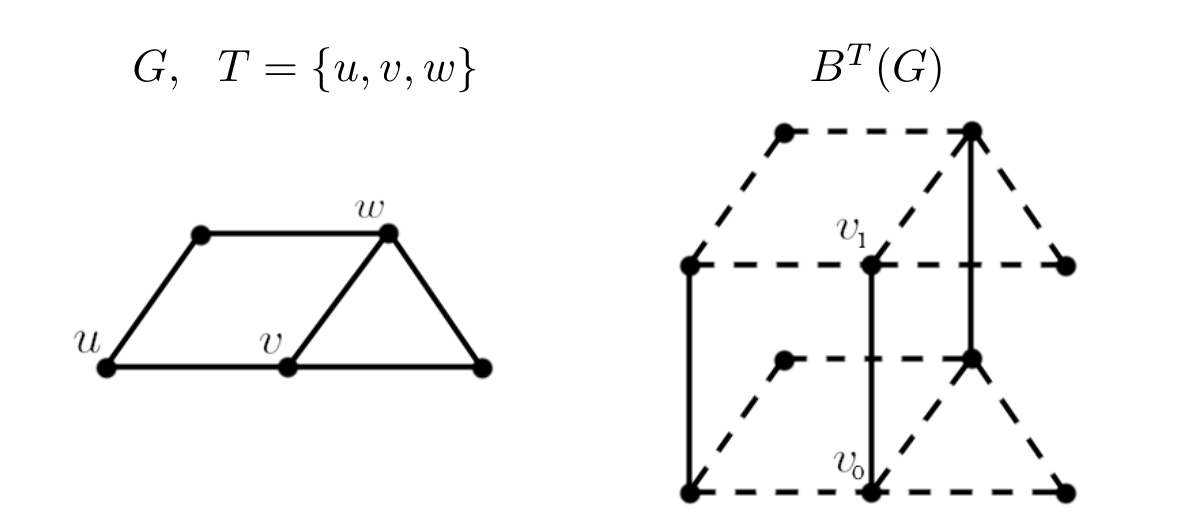}\vspace{9pt}\\
\emph{Fig. 1:\text{ } Bernoulli bond percolation on a bunkbed graph}
\end{center}\vspace{7pt}
Note that for any edge \(e \in E(G)\), both copies \(e_0\) and \(e_1\) are present with the same probability. For any two vertices \(v, w \in V(G)\), we now let \(\mathscr{B}(G, T, \vec{p}, v, w)\) denote the statement that
\begin{center}\(\mathbb{P}\bigl( v_0 \overset{B^T}{\longleftrightarrow} w_0\bigr) \quad \geq \quad \mathbb{P}\bigl( v_0 \overset{B^T}{\longleftrightarrow} w_1\bigr)\)\end{center}holds, i.e. that a connection between \(v_0\) and \(w_0\) in \(B^T\) is at least as likely as a connection between \(v_0\) and \(w_1\). Note that whether or not \(\mathscr{B}(G, T, \vec{p}, v, w)\) holds depends only on \((v, w)\) and the distribution of \(B^T\), but not on its explicit realization, justifying the notation.\vspace{6pt}\\
We say that a given graph \(G\) satisfies the \emph{strong bunkbed conjecture} (as proposed by Pieter Kasteleyn in 1985), if and only if for all \(T \subseteq V(G)\), \(\vec{p} \in [0,1]^{E(G)}\) and \(v, w \in V(G)\), the inequality \(\mathscr{B}(G, T, \vec{p}, v, w)\) holds.\vspace{6pt}\\
We say that \(G\) satisfies the \emph{weak bunkbed conjecture}, if and only if for all \(T \subseteq V(G)\), \(p \in [0,1]\) and \(v, w \in V(G)\), the inequality \(\mathscr{B}(G, T, (p)_{e\in E(G)}, v, w)\) holds; \text{i.e. if} and only if it holds in all the special cases where the percolation vector \(\vec{p}\) is constant and thus every horizontal edge is present with the same probability.\vspace{6pt}\\
We will now formulate our main results. For any vertex \(x\in V(G)\) of a graph \(G\), let \(G\setminus x\) denote the graph that arises if the vertex \(x\) and all edges incident to \(x\) are deleted from \(G\).\vspace{6pt}\\
In graph theory, a graph \(G\) is called \emph{biconnected} if and only if \(\vert V(G)\vert \geq 2\), \(G\) is connected and \(G\setminus x\) is still connected for every \(x\in V(G)\). Note that in particular, we consider the complete graph \(K_2\) consisting of exactly two vertices and an edge between them to be biconnected.\vspace{6pt}\\
A subgraph \(H\) of \(G\) is called a \emph{biconnected component} or \emph{block} of \(G\) if and only if \(H\) is maximally biconnected in \(G\); i.e. if and only if \(H\) is biconnected and for every biconnected subgraph \(H'\) of \(G\) such that \(H\) is a subgraph of \(H'\) we have \(H = H'\). Let \(\mathbb{B}(G)\) denote the set of blocks of \(G\). A block \(H\in\mathbb{B}(G)\) is called \emph{trivial} if and only if \(H \cong K_2\) .\vspace{6pt}\\
Note that every edge \(e=vw\in E(G)\) is contained in exactly one block.\footnote{To see this, consider the union \(H\) of all biconnected subgraphs \(\widetilde{G}\) of \(G\) containing \(e\), which is non-empty since \((\{v,w\}, \{e\}) \cong K_2\) itself is biconnected. It is easy to see that \(H\) is still biconnected: For every \(x\in V(H)\), we can pick \(u \in \{v,w\}\setminus \{x\}\), and since every \(z \in V(H\setminus x)\) is contained in a biconnected subgraph \(\widetilde{G}\) of \(G\) containing \(u\), there exists a path from \(z\) to \(u\) in \(\widetilde{G}\setminus x\) which is also a path in \(H\setminus x\). Thus, \(H\setminus x\) is still connected, so \(H\) is biconnected. By definition it is now clear that \(H\) is the only block containing \(e\).} Therefore, every percolation vector \(\vec{p}=(p_e)_{e\in E(G)}\in [0,1]^{E(G)}\) on \(G\) can be decomposed into percolation vectors \((p_e)_{e\in E(H)} \in [0,1]^{E(H)}\) on the blocks \(H\in \mathbb{B}(G)\) in an unambiguous way.\vspace{6pt}\\
The following result forms the heart of this paper:\vspace{4pt}
\begin{thm}
Let \(G\) be a graph and \(\vec{p} = (p_e)_{e\in E(G)} \in [0,1]^{E(G)}\) be a percolation vector.\\
Then, the statement that
\begin{center}\(\forall T\subseteq V(G)\)\emph{: } \(\forall v,w\in V(G)\)\emph{: } \(\mathscr{B}(G, T, \vec{p}, v, w)\) holds\end{center}
is equivalent to
\begin{center}\(\forall H\in\mathbb{B}(G)\)\emph{: } \(\forall T\subseteq V(H)\)\emph{: } \(\forall v,w\in V(H)\)\emph{: } \(\mathscr{B}(H, T, (p_e)_{e\in E(H)}, v, w)\) holds.\end{center}
In particular, any given graph satisfies the \emph{(}weak/strong\emph{)} bunkbed conjecture if and only if all of its biconnected components satisfy the \emph{(}weak/strong\emph{)} bunkbed conjecture.\vspace{6pt}
\end{thm}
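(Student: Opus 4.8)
The plan is to isolate the effect of a single cut vertex and then induct along the block--cut tree. Concretely, I would first prove a \emph{gluing lemma}: if \(c\) is a cut vertex of \(G\) splitting it as \(G = G_1 \cup G_2\) with \(V(G_1)\cap V(G_2)=\{c\}\) and \(E(G_1)\cap E(G_2)=\emptyset\), then for the fixed \(\vec p\) the inequality \(\mathscr B(G,T,\vec p,v,w)\) holds for all \(T,v,w\) \emph{if and only if} the corresponding statement holds on each of \(G_1\) and \(G_2\) separately. Granting this, the theorem follows by induction on \(\lvert\mathbb B(G)\rvert\): choose a leaf block \(H\) of the block--cut tree (it meets the rest of \(G\) in a single cut vertex \(c\)), set \(G_1=H\) and let \(G_2\) be the union of the remaining blocks, so that \(\mathbb B(G_2)=\mathbb B(G)\setminus\{H\}\), and combine the gluing lemma with the induction hypothesis applied to \(G_2\). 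The base case \(\lvert\mathbb B(G)\rvert\le 1\) is immediate, and vertices in different connected components give probability \(0\) on both sides, so those instances of \(\mathscr B\) hold with equality.

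The heart of the matter is the decomposition of connection probabilities across the separating pair \(\{c_0,c_1\}\) in \(B(G)\). Since \(c\) is a cut vertex, deleting \(c_0,c_1\) disconnects the \(G_1\)-side from the \(G_2\)-side, so I would condition on the two sides' percolations (which are independent, living on disjoint edge sets) and treat the post \(c_0c_1\), present iff \(c\in T\), as an independent extra edge. When \(v,w\) lie on the \emph{same} side, say \(G_1\), the only way the \(G_2\)-side can affect \(v_0\overset{B^T}{\longleftrightarrow}w_j\) is by joining \(c_0\) to \(c_1\); writing \(\tilde q\) for the probability that \(c_0,c_1\) become connected through \(G_2\) or the post, one obtains
\[ \mathbb P\big(v_0 \overset{B^T}{\longleftrightarrow} w_j\big) = \tilde q\,\mathbb P\big(v_0 \leftrightarrow w_j \text{ in } B^{T_1\cup\{c\}}(G_1)\big) + (1-\tilde q)\,\mathbb P\big(v_0 \leftrightarrow w_j \text{ in } B^{T_1\setminus\{c\}}(G_1)\big), \]
with \(T_1=T\cap V(G_1)\). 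Each summand yields a difference (\(j=0\) minus \(j=1\)) that is nonnegative by \(\mathscr B(G_1,T_1\cup\{c\},v,w)\) and \(\mathscr B(G_1,T_1\setminus\{c\},v,w)\), so this settles the same-side part of \(\Leftarrow\).

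The genuine obstacle is the \emph{mixed} case \(v\in V(G_1)\), \(w\in V(G_2)\), where the connection must cross the separator. Here I would classify each side by the \emph{trace} of its post-free percolation on \(\{c_0,c_1\}\): on the \(G_1\)-side record whether \(v_0\) reaches \(c_0\) only, \(c_1\) only, both, or neither (probabilities \(a_L,a_R,a_B,a_\emptyset\) in \(B^{T_1\setminus\{c\}}(G_1)\)), and analogously \(b^j_\bullet\) for \(w_j\) in \(B^{T_2\setminus\{c\}}(G_2)\). A short case analysis of the nine nonempty trace pairs shows the connection fails exactly when the two sides reach opposite copies of \(c\) and the post is absent, so that, with \(A:=a_L+a_R+a_B\) and \(B^j:=b^j_L+b^j_R+b^j_B\),
\[ \mathbb P\big(v_0 \overset{B^T}{\longleftrightarrow} w_j\big) = A\,B^j - \mathbbm{1}[c\notin T]\,\big(a_L b^j_R + a_R b^j_L\big). \]
The decisive step is the bunk-swapping involution \(u_i\mapsto u_{1-i}\), a measure-preserving automorphism of each post-free side (every \(e_0,e_1\) carry equal weight and the remaining posts are fixed): it gives \(B^0=B^1\), so the term \(A\,B^j\) cancels in the difference, and after substituting \(b^j_L=\mathbb P(w_j\leftrightarrow c_0)-b^j_B\) and \(b^j_R=\mathbb P(w_j\leftrightarrow c_1)-b^j_B\) (with \(b^0_B=b^1_B\)) the whole difference collapses to the single product \((1-\mathbbm{1}[c\in T])\,(a_L-a_R)\,\delta\), where \(\delta=\mathbb P(w_0\leftrightarrow c_0)-\mathbb P(w_0\leftrightarrow c_1)\) on the \(G_2\)-side. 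Now \(a_L-a_R=\mathbb P(v_0\leftrightarrow c_0)-\mathbb P(v_0\leftrightarrow c_1)\ge 0\) is exactly \(\mathscr B(G_1,T_1\setminus\{c\},v,c)\), and \(\delta\ge 0\) is exactly \(\mathscr B(G_2,T_2\setminus\{c\},w,c)\), so the product is nonnegative. I expect this extraction of the product form via the symmetry, and getting the signs to cooperate, to be the main technical hurdle; the rest is bookkeeping.

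For the converse \(\Rightarrow\), I would exploit the freedom in \(T\): taking \(T\subseteq V(G_1)\) places no post anywhere in \(G_2\) except possibly at \(c\), so the two bunks of the \(G_2\)-side carry no vertical edges and cannot be joined through \(G_2\), whence \(\tilde q=\mathbbm{1}[c\in T]\). The same-side formula then degenerates to \(\mathbb P(v_0\overset{B^T}{\longleftrightarrow}w_j)=\mathbb P\big(v_0\leftrightarrow w_j\text{ in }B^{T}(G_1)\big)\) for \(v,w\in V(G_1)\), so every instance \(\mathscr B(G_1,T,v,w)\) \emph{is} an instance \(\mathscr B(G,T,v,w)\); symmetrically for \(G_2\). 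Thus the bunkbed property of \(G\) restricts to each side, which completes the gluing lemma, and the induction along the block--cut tree yields the stated equivalence for fixed \(\vec p\). The final ``in particular'' assertion is then immediate: the strong (resp.\ weak) conjecture for \(G\) is the equivalence quantified over all \(\vec p\in[0,1]^{E(G)}\) (resp.\ all constant \(\vec p\)), and since the blocks partition \(E(G)\), the restriction of \(\vec p\) ranges over all \([0,1]^{E(H)}\) (resp.\ all constant vectors on \(E(H)\)) independently across the blocks \(H\in\mathbb B(G)\), so the quantifier ranges match up.
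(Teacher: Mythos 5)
Your proposal is correct and takes essentially the same approach as the paper: your gluing lemma is precisely the paper's \textbf{Theorem 1b}, proved the same way (restriction of self-avoiding paths for the easy direction; conditioning on whether the far side connects \(c_0\) to \(c_1\) in the same-side case; independence plus the bunk-swap mirroring symmetry collapsing the mixed-case difference to a product of two nonnegative differences, your \((a_L-a_R)\,\delta\) being the paper's \((Q-q)(R-r)\)). The only deviations are cosmetic: you induct on \(\vert\mathbb{B}(G)\vert\) by peeling a leaf block of the block--cut tree, while the paper performs strong induction on \(\vert V(G)\vert\) and splits at a cut vertex into all branches at once.
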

Up until now, most results focused on the weak version of the bunkbed conjecture, proving it for complete graphs and a bit more complicated graphs; see for example \cite{van2019bunkbed} or \cite{richthammer2022bunkbed}. Moreover, it has been proved that for every graph \(G\), there always exists an \(\varepsilon = \varepsilon(G)>0\) such that the weak bunkbed conjecture holds for all constant edge weights \(p \leq \varepsilon\) or \(p \geq 1-\varepsilon\), and this result has later been extended to the strong version; see \cite{hutchcroft2023bunkbed} and \cite{hollom2024new}. Recently, however, even the weak version of the bunkbed conjecture has been shown to be false for some graphs; see \cite{gladkov2024bunkbed}, which builds upon results from \cite{hollom2024bunkbed}. We now look for classes of graphs that still satisfy the strong version of the conjecture in particular.\vspace{4pt}\\
For any graph \(G\), a \emph{cycle} in \(G\) is a subgraph \(C\) of \(G\) that is a cycle graph;\footnote{Note that a path of length \(\geq 3\) whose starting point is also its end point only corresponds to a cycle if it visits no vertex other than the starting point for a second time (i.e. not every circuit is a cycle), and a cycle \(C\) in \(G\) is not necessarily the graph induced onto \(V(C)\) by \(G\) (i.e. in \(G\) there might exist chords of \(C\)).\vspace{2pt}} i.e. \(C \cong C_n\) for some \(n\in\mathbb{N}\), \(n \geq 3\). A graph \(G\) is called a \emph{tree graph} if and only if \(G\) is connected and there exist no cycles in \(G\). A graph \(G\) is called a \emph{cactus graph}\footnote{In graph theory \(-\) as opposed to phytology \(-\) every tree is a cactus, so \textbf{Theorem 2} is a natural common generalization of \textbf{Proposition 6} and \textbf{Proposition 7} from the following section.\vspace{2pt}} if and only if \(G\) is connected and any two cycles \(C \neq \widetilde{C}\) in \(G\) share at most one common vertex:\footnote{Equivalently, \(G\) is a cactus graph if and only if \(G\) is connected and any two cycles \(C \neq \widetilde{C}\) in \(G\) share no common edge: \(E(C) \cap E(\widetilde{C}) = \emptyset\). We discuss two more characterizations of cactus graphs later.}  \(\vert V(C) \cap V(\widetilde{C})\vert \leq 1\).
\begin{thm}
All cactus graphs satisfy the strong bunkbed conjecture.\vspace{6pt}
\end{thm}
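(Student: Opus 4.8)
The plan is to reduce the statement to its biconnected components via \textbf{Theorem 1} and then identify and handle those components explicitly. First I would establish the structural fact that every block of a cactus graph is either trivial (isomorphic to \(K_2\)) or a cycle \(C_n\). Indeed, a block is by definition maximally biconnected; if such a block \(H\) is neither an edge nor a cycle, then it is \(2\)-connected with \(\vert V(H)\vert \geq 3\) and admits an open ear decomposition in which at least one nontrivial ear is attached to a cycle already present in \(H\). This produces a theta subgraph, and hence two distinct cycles in \(H\) sharing a path with at least one edge and therefore at least two vertices, contradicting the defining property of a cactus graph. Thus \(\mathbb{B}(G)\) consists only of edges and cycles, and by \textbf{Theorem 1} it suffices to verify the strong bunkbed conjecture for \(K_2\) and for every cycle \(C_n\). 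For the trivial block \(K_2\) the inequality \(\mathscr{B}(K_2, T, \vec p, v, w)\) reduces to a direct finite check over the four pairs \((v,w)\) and the possible post configurations, so the entire content of the theorem lies in the cycles.

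For a cycle \(C = C_n\), fix \(T\), \(\vec p\) and \(v, w\). If \(v = w\) the left-hand probability equals \(1\), and if \(T = \emptyset\) the two bunks are disconnected so the right-hand probability is \(0\); hence assume \(v \neq w\) and \(T \neq \emptyset\). These two vertices split \(C\) into two internally disjoint arcs \(A\) and \(B\), each a path from \(v\) to \(w\). The key idea is to condition on the restriction of the percolation configuration to each arc separately. Each arc, together with the posts at its internal \(T\)-vertices, is the bunkbed graph of a path, i.e. a planar ladder, and its effect on the four terminals \(v_0, v_1, w_0, w_1\) is captured entirely by the partition of these terminals into the components in which they lie. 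A Jordan-curve argument shows that, by planarity, this partition is always noncrossing with respect to the cyclic order \(v_0, w_0, w_1, v_1\), so only finitely many connection patterns can occur. Writing \(\mu_A, \mu_B\) for the resulting independent distributions over these patterns, and folding the posts at \(v\) and \(w\) themselves into the gluing of the two arcs, the event that \(v_0\) is connected to \(w_0\) (respectively \(w_1\)) in the whole cycle is exactly the event that \(v_0\) and \(w_0\) (respectively \(w_1\)) lie in the same class of the common refinement of a pattern drawn from \(\mu_A\) and one drawn from \(\mu_B\).

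I would then exploit the bunk-swapping symmetry: exchanging the subscripts \(0 \leftrightarrow 1\) is a measure-preserving automorphism of \(B^T(C)\), so each of \(\mu_A, \mu_B\) is invariant under the induced involution on patterns, while this involution maps the event \(v_0 \leftrightarrow w_0\) to \(v_1 \leftrightarrow w_1\) and the event \(v_0 \leftrightarrow w_1\) to \(v_1 \leftrightarrow w_0\). The theorem then becomes a finite inequality comparing, under the symmetric product distribution \(\mu_A \otimes \mu_B\), the probability that the glued partition joins \(v_0\) to \(w_0\) against the probability that it joins \(v_0\) to \(w_1\).

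I expect the main obstacle to be precisely this combination step: although each arc is transparent in isolation, the two arcs interact through the shared terminals, and one must show that the noncrossing structure together with the \(0 \leftrightarrow 1\) symmetry forces the same-bunk connection to dominate. I would attempt to resolve it by a pattern-by-pattern coupling, pairing each configuration that connects \(v_0\) to \(w_1\) with one of equal weight that connects \(v_0\) to \(w_0\) and using the involution to supply the partner; and, failing a clean bijection, by enumerating the finitely many noncrossing patterns on four terminals and verifying the resulting linear inequality directly.
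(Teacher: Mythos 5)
Your reduction via \textbf{Theorem 1} and the classification of cactus blocks as copies of \(K_2\) and cycles is sound and agrees with the paper (its \textbf{Proposition 8} establishes the same structural fact, by a direct cycle-merging argument rather than an ear decomposition), and your planarity observation about the arcs is correct. The genuine gap is in the final combination step for cycles. The finite statement you ultimately plan to verify --- that for \emph{any} two independent, bunk-swap-invariant distributions \(\mu_A, \mu_B\) on noncrossing connection patterns of \(\{v_0, w_0, w_1, v_1\}\), the glued partition joins \(v_0\) to \(w_0\) at least as often as \(v_0\) to \(w_1\) --- is false at that level of generality. Take \(\pi = \{\{v_0,w_1\},\{v_1\},\{w_0\}\}\) and its swap \(\bar{\pi} = \{\{v_1,w_0\},\{v_0\},\{w_1\}\}\); both are noncrossing (a single two-element block plus singletons never crosses anything), and \(\mu_A = \mu_B = \frac{1}{2}(\delta_\pi + \delta_{\bar{\pi}})\) is swap-invariant. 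Gluing these gives \(\mathbb{P}(v_0 \sim w_0) = 0\) but \(\mathbb{P}(v_0 \sim w_1) = 3/4\). Hence neither the enumeration of noncrossing patterns nor an equal-weight pattern-by-pattern coupling can succeed: the data you retain about each arc (noncrossing-ness plus the \(0 \leftrightarrow 1\) symmetry) is too coarse, because such distributions are not all realizable by a ladder. What is missing is an arc-specific dominance input, e.g.\ that each arc satisfies \(\mathbb{P}(v_0 \sim w_0,\, v_0 \not\sim w_1) \geq \mathbb{P}(v_0 \sim w_1,\, v_0 \not\sim w_0)\) together with suitable control of the three- and four-terminal patterns --- and these facts about ladders are essentially the original problem restated, so they require their own proof.

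It is instructive to compare with how the paper avoids the two-terminal gluing altogether, via a trichotomy in its \textbf{Proposition 7}. If \(vw \in E(C)\), deleting \(e^\ast = vw\) leaves a path, a tree (\textbf{Proposition 6}), and \textbf{Lemma B} (conditioning on whether some copy of \(e^\ast\) is open, with a mirroring cancellation under that conditioning) upgrades the inequality back to \(C\). If \(vw \notin E(C)\) and \emph{both} arcs contain an interior \(T\)-vertex, \textbf{Lemma C}'s refined mirroring argument (reflecting the configuration beyond a separating set of posts) shows the two connection probabilities are exactly \emph{equal} --- note that any correct version of your pattern calculus would have to reproduce this equality, a further sign that swap-invariance alone cannot suffice. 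In the remaining case one arc has no interior post, and a coupling contracts it to a single edge whose probability is the product of the edge weights along the arc, reducing to the adjacent case. If you wish to salvage your two-arc scheme, the needed arc-level inequalities can be proven by mirroring a suffix of the ladder at the first post a connecting path uses --- which is precisely the mechanism of the paper's \textbf{Lemma C} --- but as written the proposal does not constitute a proof.
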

We will actually even prove a slightly stronger version of this result; see \textbf{Theorem 11}. For the weak bunkbed conjecture, \textbf{Theorem 1} will extend previous results from other authors:
\begin{thm}
If \(G\) is a graph such that every block \(H\in\mathbb{B}(G)\) is either a cycle, complete, complete bipartite, symmetric complete \(k\)-partite or an edge difference of a complete graph and a complete subgraph, then \(G\) satisfies the weak bunkbed conjecture.\vspace{6pt}
\end{thm}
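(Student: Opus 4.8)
The plan is to let \textbf{Theorem 1} carry the entire reduction and then verify each admissible block type separately. By \textbf{Theorem 1}, $G$ satisfies the weak bunkbed conjecture if and only if every block $H\in\mathbb{B}(G)$ does, so under the hypothesis it suffices to check that each of the five listed classes---cycles, complete graphs, complete bipartite graphs, symmetric complete $k$-partite graphs, and edge differences of a complete graph with a complete subgraph---satisfies the weak bunkbed conjecture on its own. No genuinely new percolation inequality needs to be established here; the content lies entirely in assembling \textbf{Theorem 1} with results already in the literature (and, for cycles, with \textbf{Theorem 2} of this paper).

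First I would dispose of the cycle case internally: a single cycle contains exactly one cycle subgraph, so it is vacuously a cactus graph, and hence \textbf{Theorem 2} already yields the strong (and a fortiori the weak) bunkbed conjecture for it. For the four remaining classes I would invoke the relevant theorems of \cite{van2019bunkbed} and \cite{richthammer2022bunkbed}, which between them establish the weak bunkbed conjecture for complete graphs, complete bipartite graphs, symmetric complete $k$-partite graphs and edge differences of the form $K_n\setminus K_m$. Since the union of these five classes is exactly the collection of admissible blocks named in the hypothesis, every $H\in\mathbb{B}(G)$ satisfies the weak bunkbed conjecture, and \textbf{Theorem 1} completes the argument.

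The main obstacle is not a mathematical one but a matter of careful bookkeeping across sources. I would have to confirm that the weak bunkbed statement proven in each cited paper matches the constant-weight formulation $\mathscr{B}(H,T,(p)_{e\in E(H)},v,w)$ used here---quantified over \emph{all} $T\subseteq V(H)$, all $p\in[0,1]$ and all $v,w\in V(H)$---rather than over some restricted family of terminal pairs or transversals, and that the graph-theoretic definitions (e.g. what precisely is meant by ``symmetric complete $k$-partite'' or by an ``edge difference of a complete graph and a complete subgraph'') coincide with those of the respective authors. Once the definitions are reconciled, the proof is immediate, which is precisely why \textbf{Theorem 1} is doing the heavy lifting in this result.
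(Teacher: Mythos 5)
Your proposal is correct and takes essentially the same route as the paper, which proves \textbf{Theorem 3} by combining \textbf{Theorem 1} with \textbf{Theorem 12} (the imported results of \cite{van2019bunkbed} and \cite{richthammer2022bunkbed} covering the four non-cycle block classes). The only cosmetic difference is that for the cycle case the paper invokes \textbf{Proposition 7} directly instead of routing through \textbf{Theorem 2}, but since \textbf{Theorem 2} is itself deduced from \textbf{Proposition 7}, the underlying argument is identical.
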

We will also examine counterexamples to the strong bunkbed conjecture, and while proving the following first result we will demonstrate how it might be strengthened in the future.
\begin{thm}
Every counterexample to the strong bunkbed conjecture must contain the diamond graph and, more generally, at least one of the following two graphs as a minor:
\begin{center}\includegraphics[scale=0.114]{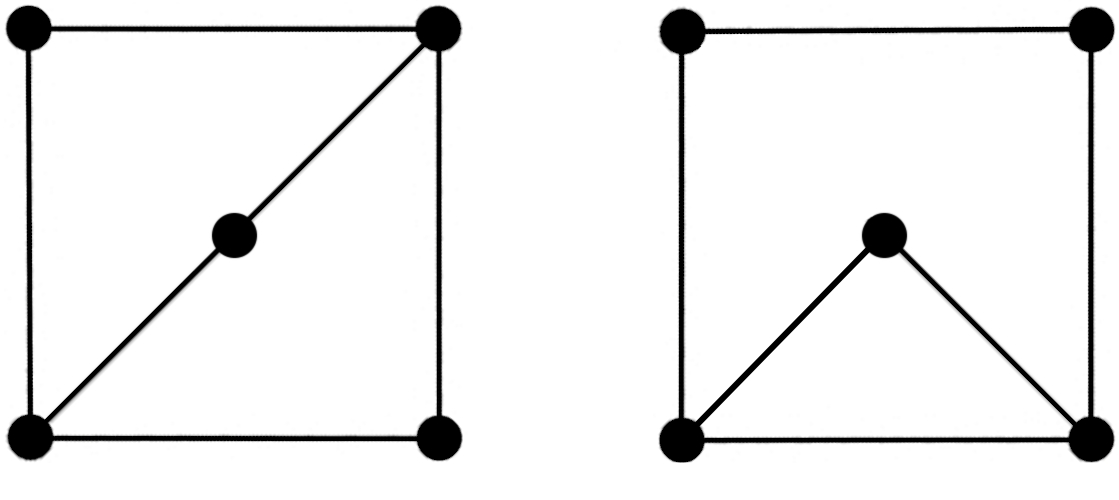}
\end{center}
\end{thm}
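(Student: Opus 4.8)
The plan is to argue by contraposition and reduce everything, via \textbf{Theorem 1}, to a single biconnected block, where an ear-decomposition analysis pins down the unavoidable minors. So suppose \(G\) is a counterexample to the strong bunkbed conjecture. By \textbf{Theorem 1} at least one block \(H\in\mathbb{B}(G)\) is itself a counterexample, and since every minor of \(H\) is also a minor of \(G\), it suffices to produce the desired minors inside the biconnected graph \(H\).

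The key elementary observation I would record first is that in a \emph{simple} graph every theta subgraph — two vertices joined by three internally disjoint paths — is automatically a subdivision of the diamond: two length-\(1\) paths between the same pair would constitute a double edge, so at least two of the three paths have length \(\geq 2\), which is exactly the shape of a subdivision of \(K_4-e\). Now \(H\) is biconnected and a counterexample, so \textbf{Theorem 2} forbids \(H\) from being a cycle (or \(K_2\)). A biconnected graph that is not a cycle contains a theta subgraph — in an ear decomposition, the starting cycle \(C\) together with the first ear \(P_1\) forms one — so \(H\) contains a subdivision of the diamond, hence the diamond as a minor. This already settles the first assertion.

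For the stronger statement I would prove the structural claim that a biconnected graph containing \emph{no} non-trivial subdivision of the diamond as a minor must be a cycle, the diamond \(K_4-e\), or \(K_4\). Writing the first theta \(C\cup P_1\) as \(\Theta_{a,b,c}\) with \(a=|P_1|\) and \(b+c=|C|\), a short case distinction shows that unless the multiset of lengths is \(\{1,2,2\}\) this theta is already a non-trivial subdivision of the diamond, and any such subdivision contains \(K_{2,3}=\Theta_{2,2,2}\) as a minor when all three paths have length \(\geq 2\), and \(\Theta_{1,2,3}\) as a minor when one path is a single edge. In the surviving case \(C\cup P_1\cong K_4-e\), and I would check directly that attaching any further ear to \(K_4-e\), apart from inserting the single missing edge (which yields \(K_4\)), creates three suitable internally disjoint paths and thus a \(K_{2,3}\)- or \(\Theta_{1,2,3}\)-minor; the same computation rules out attaching any ear to \(K_4\). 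Hence only cycles, \(K_4-e\) and \(K_4\) avoid both minors.

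To conclude, cycles satisfy the strong conjecture by \textbf{Theorem 2}, and the two remaining graphs \(K_4-e\) and \(K_4\) satisfy it as well — being small enough to settle directly — so none of them can equal the counterexample \(H\). Therefore \(H\), and with it \(G\), must contain a non-trivial subdivision of the diamond, i.e.\ one of the two displayed graphs (the diamond with its central edge subdivided, which is \(K_{2,3}\), and the diamond with an outer edge subdivided, which is \(\Theta_{1,2,3}\)), as a minor. I expect the main obstacle to be the ear-decomposition bookkeeping in the structural claim: one must verify that the finitely many biconnected graphs escaping both minor conditions are exactly \(\{C_n, K_4-e, K_4\}\), and separately confirm that the two genuine exceptions \(K_4-e\) and \(K_4\) really do satisfy the strong bunkbed conjecture, since on four vertices they cannot themselves contain a five-vertex minor.
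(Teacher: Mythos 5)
Your proposal is correct and follows essentially the same route as the paper: reduce to a biconnected block via \textbf{Theorem 1}, then invoke the structural fact that the only biconnected graphs avoiding both displayed minors are cycles, \(K_4 - e\) and \(K_4\) \(-\) the paper compresses this into \textbf{Theorem 11} plus an ``easy to see'' remark, and your ear-decomposition case analysis is a valid fleshing-out of exactly that step. The one point you wave through \(-\) that \(K_4 - e\) and \(K_4\) satisfy the strong bunkbed conjecture, which is not really a ``direct'' check \(-\) is precisely \textbf{Proposition 9} and \textbf{Proposition 10} of the paper (proved there via \textbf{Lemmas A}, \textbf{B} and \textbf{C}), so citing those closes your argument completely.
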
\pagebreak
For the record, we note that the bunkbed conjecture can be formulated for countably infinite graphs as well.\footnote{If \(G^\ast\) is a random graph consisting of a deterministic and countable set of vertices and a random set of edges between them, one can easily check that the event that fixed vertices \(v\) and \(w\) are connected by a path in \(G^\ast\) is indeed measurable, because it can be written as a countable union of events of the form that some fixed path \(\gamma\) from \(v\) to \(w\) exists in \(G^\ast\), and any such event is a cylinder set in the product-\(\sigma\)-algebra.\vspace{2pt}} The results proved in this paper hold more generally in the case of countable graphs. The proof of the auxiliary \textbf{Theorem 1b} from the next section can be copied, but both directions of the equivalence in \textbf{Theorem 1} have to be adressed seperately and the proof has to be adjusted in a less trivial way than it might seem at first glance.
\section{Proofs}
In this section we give proofs for the stated results. \textbf{Theorem 1} essentially deals with graphs consisting of several small graphs that are glued together at specific vertices one at a time. To start off, we deal with the case of just two such graphs \(G\) and \(H\) being glued together by identifying any one vertex of \(G\) with any one vertex of \(H\). Since we only care about graphs up to graph isomorphism rather than the explicit names of the vertices, we may assume w.l.o.g. that \(\vert V(G) \cap V(H)\vert = 1\). For the shared vertex \(x \in V(G) \cap V(H)\), we define \(G\cup H\) to be the graph that results when \(G\) and \(H\) are glued together at \(x\), i.e. we simply let \(V(G\cup H)\vcentcolon=V(G) \cup V(H)\) and \(E(G\cup H) \vcentcolon= E(G) \cup E(H)\).
\begin{center}\vspace{3pt}
\includegraphics[scale=0.35]{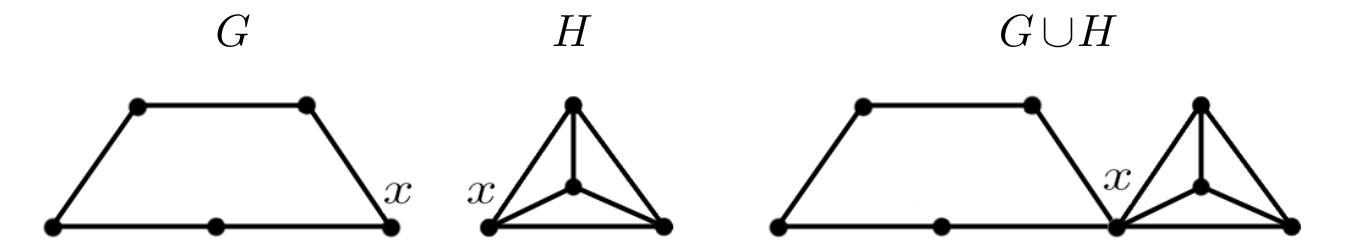}
\vspace{8pt}\\
\emph{Fig. 2: \text{ }\(G \cong C_5\) and \(H \cong K_4\) glued together at some shared vertex}
\end{center}\vspace{12pt}
Note that if \(\vec{p} \in [0,1]^{E(G)}\) and \(\vec{q} \in [0,1]^{E(H)}\) are percolation vectors on \(G\) and \(H\), respectively, we can assemble all of their edge weights in a new and unambiguous percolation vector \((\vec{p},\vec{q}) \in [0,1]^{E(G\text{ }\cup\text{ } H)}\) on \(G\cup H\), because \(E(G) \cap E(H) = \emptyset\). Therefore, our first step will be to prove\footnote{It has been brought to the attention of the author that recently, another work with overlapping results was submitted, see \cite{meunier2024bunkbed}. The overlap mainly consists of special cases of \textbf{Theorem 1b} from above (and immediate corollaries thereof). However, the aforementioned theorem was proven independently from the cited source as part of a bachelor's thesis that this paper is based on, submitted in 2023 at the university of Paderborn under the title "Beweis der Bunkbed-Vermutung für Graphen mit kantendisjunkten Kreisen".} the following version of \textbf{Theorem 1}:
\begin{1b} Let \(G\) and \(H\) be two graphs such that \(\vert V(G) \cap V(H)\vert = 1\). Consider two fixed percolation vectors \(\vec{p} \in [0,1]^{E(G)}\) and \(\vec{q} \in [0,1]^{E(H)}\) on \(G\) and \(H\), respectively.\\
Then, the statement that
\begin{center}\(\forall T \subseteq V(G\cup H)\)\emph{: } \(\forall v, w \in V(G\cup H)\)\emph{: } \(\mathscr{B}(G\cup H, T, (\vec{p},\vec{q}), v, w)\) holds\end{center}
is true if and only if both
\begin{center}\(\forall T \subseteq V(G)\)\emph{: } \(\forall v, w \in V(G)\)\emph{: } \(\mathscr{B}(G, T, \vec{p}, v, w)\) holds\end{center}
and
\begin{center}\(\forall T \subseteq V(H)\)\emph{: } \(\forall v, w \in V(H)\)\emph{: } \(\mathscr{B}(H, T, \vec{q}, v, w)\) holds\end{center}
are true. In particular, \(G\cup H\) satisfies the \emph{(}weak/strong\emph{)} bunkbed conjecture if and only if both \(G\) and \(H\) satisfy the \emph{(}weak/strong\emph{)} bunkbed conjecture.\vspace{6pt}
\end{1b}\vspace{8pt}\pagebreak
The proof of \textbf{Theorem 1b} relies on decomposition arguments that use the symmetrical structure inherent to bunkbed graphs. Note that when considering a graph \(G\) and a random subgraph \(B^T\) of \(B(G)\) as described before, the mirrored random subgraph \(\widetilde{B^T}\) of \(B(G)\) that emerges when we let every vertex of \(B(G)\) be present, let every post \(v_0v_1 \in E(B(G))\) be present deterministically if and only if \(v\in T\) and let every horizontal edge \(e_i \in E(B(G))\) be present if and only if \(e_{1-i}\) is present in \(B^T\) is identically distributed, and for all \(v,w \in V(G)\) and \(i,j \in \{0,1\}\), \(v_i\) and \(w_j\) are connected by a path in \(B^T\) if and only if \(v_{1-i}\) and \(w_{1-j}\) are connected by a path in \(\widetilde{B^T}\). In particular, it follows that
\begin{center}\(\mathbb{P}\bigl( v_i \overset{B^T}{\longleftrightarrow} w_j\bigr)\quad=\quad\mathbb{P}\bigl( v_{1-i} \overset{B^T}{\longleftrightarrow} w_{1-j}\bigr).\)\end{center}
We will refer to arguments similar to this as \emph{mirroring arguments}. Note that even if we only mirror the copies of some of the edges of \(G\) instead of all of them, the distribution of \(B^T\) and \(\widetilde{B^T}\) will always be the same. After the proof, \textbf{Theorem 1} will follow in an elementary and purely graph theoretical way from \textbf{Theorem 1b}, i.e. without actually getting into probability theory again. The crux behind \textbf{Theorem 2} is that any block of a cactus graph is either a cycle or trivial, so \textbf{Theorem 1} reduces the task of proving \textbf{Theorem 2} to the task of proving the strong bunkbed conjecture for all cycle graphs \(C_n\), \(n\geq 3\).\vspace{6pt}
\begin{proof}(Of \textbf{Theorem 1b}.)\vspace{1pt}\\
For every \(T \subseteq V(G\cup H)\), let \(B^T(G\cup H) \vcentcolon= B^T(G\cup H, (\vec{p},\vec{q}))\) be a random graph as described before. W.l.o.g., assume that the horizontal edges of \(B^T(G\cup H)\) are always the same for every such \(T\), i.e. we generate the random horizontal edges only once and then, for every \(T\), we add all the posts \(y_0y_1\) with \(y \in T\) to create \(B^T(G\cup H)\). Moreover, we abbreviate \(T_G \vcentcolon= T \cap V(G)\) and \(T_H \vcentcolon= T \cap V(H)\), and we let \(B^{T_G}(G) \vcentcolon= B^{T_G}(G, \vec{p})\) and \(B^{T_H}(H) \vcentcolon= B^{T_H}(H, \vec{q})\) denote the random graphs induced by \(B^T(G\cup H)\) onto \(V(B(G))\) and \(V(B(H))\), respectively, i.e. any edge of \(B(G)\) is present in \(B^{T_G}(G)\) if and only if it is present in \(B^T(G\cup H)\), and similarly for \(B^{T_H}(H)\). Let \(x \in V(G) \cap V(H)\) be the shared vertex.\vspace{4pt}
\begin{center}
\includegraphics[scale=0.35]{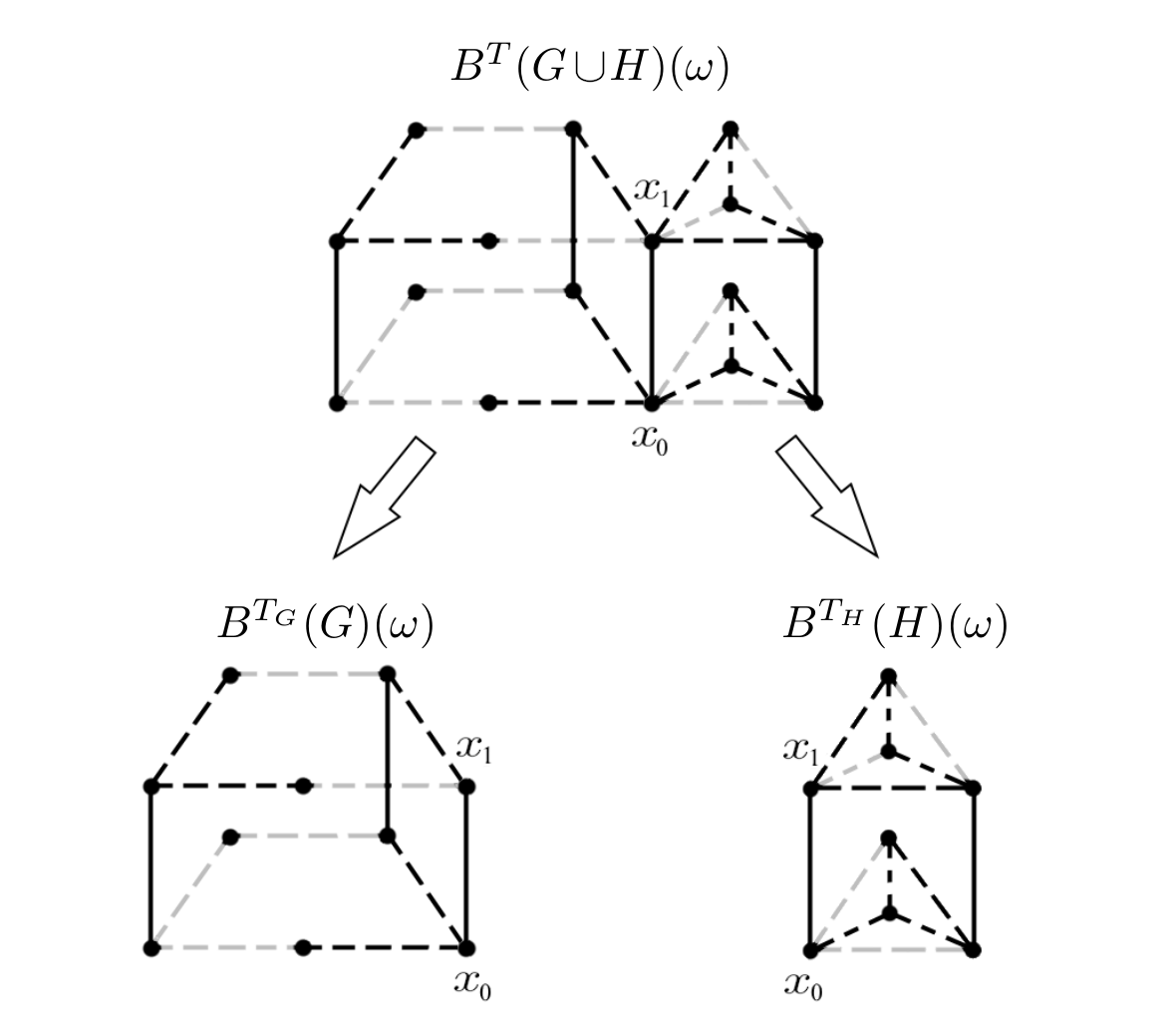}\\
\emph{Fig. 3: \text{ } Induced Bernoulli bond percolation}
\end{center}\vspace{8pt}
Note that for any \(T_1 \subseteq V(G)\) and \(T_2 \subseteq V(H)\), the random graphs \(B^{T_1}(G)\) and \(B^{T_2}(H)\) are now well defined individually, since for every such \(T_1\), there is a \(T \subseteq V(G\cup H)\) such that \(T_G = T_1\), and by design, the edges present in \(B^{T_1}(G)\) don't depend on the choice of such \(T\) (and similarly for \(T_2\)). This means that \(T_1\) and \(T_2\) don't even have to be cut down from the same \(T\); in particular, the post \(x_0x_1\) may be included in \(B^{T_1}(G)\), but not in \(B^{T_2}(H)\). Furthermore, since \(B(G)\) and \(B(H)\) don't share a common horizontal edge, any two such random graphs \(B^{T_1}(G)\) and \(B^{T_2}(H)\) are always independent of each other. We show both directions of the stated equivalence seperately.\vspace{6pt}\\
"\(\Rightarrow\)\text{": } This is the easy direction. If we want to show that the inequality \(\mathscr{B}(G, T_1, \vec{p}, v, w)\) holds for any \(T_1 \subseteq V(G)\) and \(v, w\in V(G)\), then in particular, we have \(T_1\subseteq V(G\cup H)\) and \(v,w \in V(G\cup H)\). It is easy to see that any self-avoiding path \(\gamma\) from \(v_0\) to any \(w_i\) in \(B^{T_1}(G\cup H)\) must never leave \(B^{T_1}(G)\): Otherwise, as outside of \(B^{T_1}(G)\) the path \(\gamma\) can't switch between the bunks \(\bigl(\text{since }T_1\subseteq V(G)\bigr)\), it would visit one copy of \(x\) twice. It follows that for \(i\in\{0,1\}\), we have \begin{center}\(\{\overset{B^{T_1}(G\text{ }\cup\text{ } H)}{v_0 \text{ }\text{ }\longleftrightarrow\text{ }\text{ }w_i}\}\quad=\quad\{v_0 \overset{B^{T_1}(G)}{\longleftrightarrow} w_i\}\).\end{center} Therefore, \(\mathscr{B}(G, T_1, \vec{p}, v, w)\) is equivalent to the assumed inequality \(\mathscr{B}(G\cup H, T_1, (\vec{p},\vec{q}), v, w)\). A similar argument also shows that  \(\mathscr{B}(H, T_2, \vec{q}, v, w)\) holds for any \(T_2 \subseteq V(H)\) and any \(v,w \in V(H)\).\vspace{12pt}\\
"\(\Leftarrow\)\text{": } Let \(v, w \in V(G\cup H)\) and \(T \subseteq V(G\cup H)\). In order to show that the inequality \(\mathscr{B}(G\cup H, T, (\vec{p},\vec{q}), v, w)\) holds, we distinguish between the following two cases:\vspace{12pt}\\
\emph{\underline{Case 1}: Both of the vertices \(v\) and \(w\) lie in \(V(G)\) or both lie in \(V(H)\).}\vspace{10pt}\\
W.l.o.g., assume that \(v, w \in V(G)\). First, we note that for \(i \in \{0,1\}\), every self-avoiding path from \(v_0\) to \(w_i\) that passes through one of the copies of \(x\) and visits more vertices from \(V(H) \times \{0,1\}\) has to eventually come back and pass through the other copy of \(x\). As such, the only way that the random graph \(B^{T_H}(H)\) can contribute to a connection between \(v_0\) and \(w_i\) in \(B^T(G\cup H)\) is if it allows a path to switch between the bunks of \(B^{T_G}(G)\) by providing a connection between \(x_0\) and \(x_1\). In particular, on the event \(\{x_0 \centernot{\overset{B^{T_H}(H)}{\longleftrightarrow}} x_1\}\), \(v_0\) and \(w_i\) being connected by a path in \(B^T(G\cup H)\) is equivalent to \(v_0\) and \(w_i\) being connected by a path in \(B^{T_G}(G)\), while on the event \(\{x_0 \overset{B^{T_H}(H)}{\longleftrightarrow} x_1\}\), \(v_0\) and \(w_i\) being connected by a path in \(B^T(G\cup H)\) is equivalent to \(v_0\) and \(w_i\) being connected by a path in \(B^{T_G\text{ }\cup\text{ }\{x\}}(G)\).\vspace{6pt}\\Furthermore, by the assumptions of this direction of the theorem, we know that both \(\mathscr{B}(G, T_G, \vec{p}, v, w)\) and \(\mathscr{B}(G, T_G\cup\{x\}, \vec{p}, v, w)\) hold.\vspace{4pt}\begin{spacing}{1.3}\noindent Together, all of this shows that
\begin{align*}
&\mathbb{P}\bigl(\overset{B^T(G\text{ }\cup\text{ } H)}{v_0 \text{ }\text{ }\longleftrightarrow\text{ }\text{ }w_0}\bigr)\\
= \quad &\mathbb{P}\bigl(\overset{B^T(G\text{ }\cup\text{ } H)}{v_0 \text{ }\text{ }\longleftrightarrow\text{ }\text{ }w_0}, \text{ } x_0 \centernot{\overset{B^{T_H}(H)}{\longleftrightarrow}} x_1\bigr) \quad\text{ } &&&+&\quad\quad \mathbb{P}\bigl( \overset{B^T(G\text{ }\cup\text{ } H)}{v_0 \text{ }\text{ }\longleftrightarrow\text{ }\text{ }w_0}, \text{ } x_0 \overset{B^{T_H}(H)}{\longleftrightarrow} x_1\bigr)\\
=\quad &\mathbb{P}\bigl( v_0 \overset{B^{T_G}(G)}{\longleftrightarrow} w_0, \text{ } x_0 \centernot{\overset{B^{T_H}(H)}{\longleftrightarrow}} x_1\bigr)\quad\quad &&&+&\quad\quad \mathbb{P}\bigl( \overset{B^{T_G \text{ }\cup \text{ }\{x\}}(G)}{v_0 \quad\longleftrightarrow\quad w_0}, \text{ }  x_0 \overset{B^{T_H}(H)}{\longleftrightarrow} x_1\bigr)\\
=\quad &\mathbb{P}\bigl( v_0 \overset{B^{T_G}(G)}{\longleftrightarrow} w_0\bigr)\text{ } \cdot \text{ }\mathbb{P}\bigl(x_0 \centernot{\overset{B^{T_H}(H)}{\longleftrightarrow}} x_1\bigr) &&&+&\quad\quad \mathbb{P}\bigl( \overset{B^{T_G \text{ }\cup \text{ }\{x\}}(G)}{v_0 \quad\longleftrightarrow\quad w_0}\bigr)\text{ } \cdot \text{ }\mathbb{P}\bigl(x_0 \overset{B^{T_H}(H)}{\longleftrightarrow} x_1\bigr)\\
\geq\quad &\mathbb{P}\bigl( v_0 \overset{B^{T_G}(G)}{\longleftrightarrow} w_1\bigr)\text{ } \cdot \text{ }\mathbb{P}\bigl(x_0 \centernot{\overset{B^{T_H}(H)}{\longleftrightarrow}} x_1\bigr) &&&+&\quad\quad \mathbb{P}\bigl( \overset{B^{T_G \text{ }\cup \text{ }\{x\}}(G)}{v_0 \quad\longleftrightarrow\quad w_1}\bigr)\text{ } \cdot \text{ }\mathbb{P}\bigl(x_0 \overset{B^{T_H}(H)}{\longleftrightarrow} x_1\bigr)\\
=\quad &\mathbb{P}\bigl( v_0 \overset{B^{T_G}(G)}{\longleftrightarrow} w_1, \text{ } x_0 \centernot{\overset{B^{T_H}(H)}{\longleftrightarrow}} x_1\bigr) \quad\quad &&&+&\quad\quad \mathbb{P}\bigl( \overset{B^{T_G \text{ }\cup \text{ }\{x\}}(G)}{v_0 \quad\longleftrightarrow\quad w_1}, \text{ }  x_0 \overset{B^{T_H}(H)}{\longleftrightarrow} x_1\bigr)\\
= \quad &\mathbb{P}\bigl( \overset{B^T(G\text{ }\cup\text{ } H)}{v_0 \text{ }\text{ }\longleftrightarrow\text{ }\text{ }w_1}, \text{ } x_0 \centernot{\overset{B^{T_H}(H)}{\longleftrightarrow}} x_1\bigr) \quad\text{ } &&&+&\quad\quad \mathbb{P}\bigl( \overset{B^T(G\text{ }\cup\text{ } H)}{v_0 \text{ }\text{ }\longleftrightarrow\text{ }\text{ }w_1}, \text{ } x_0 \overset{B^{T_H}(H)}{\longleftrightarrow} x_1\bigr)\\
=\quad &\overset{\text{ }}{\mathbb{P}\bigl( \overset{B^T(G\text{ }\cup\text{ } H)}{v_0 \text{ }\text{ }\longleftrightarrow\text{ }\text{ }w_1}\bigr),}
\end{align*}
\noindent concluding the first case.\end{spacing}\pagebreak
\noindent\emph{\underline{Case 2}: One of the vertices \(v\) and \(w\) lies in \(V(G)\setminus\{x\}\) and the other one lies in \(V(H)\setminus\{x\}\).}\vspace{10pt}\\
W.l.o.g., assume \(v\in V(G)\setminus\{x\}\), \(w\in V(H)\setminus\{x\}\). Since  \(\mathscr{B}(G, T_G, \vec{p}, v, x)\) holds, we have
\begin{align*}
&\mathbb{P}\bigl(v_0 \overset{B^{T_G}(G)}{\longleftrightarrow} x_0\bigr) \text{ }\text{ } = \text{ }\text{ } \mathbb{P}\bigl(v_0 \overset{B^{T_G}(G)}{\longleftrightarrow} x_0\text{, }v_0 \centernot{\overset{B^{T_G}(G)}{\longleftrightarrow}} x_1\bigr) \text{ }\text{ } + \text{ }\text{ } \mathbb{P}\bigl(v_0 \overset{B^{T_G}(G)}{\longleftrightarrow} x_0\text{, }v_0 \overset{B^{T_G}(G)}{\longleftrightarrow} x_1\bigr)\\
\geq \text{ }\text{ } &\mathbb{P}\bigl(v_0 \overset{B^{T_G}(G)}{\longleftrightarrow} x_1\bigr) \text{ }\text{ } = \text{ }\text{ } \mathbb{P}\bigl(v_0 \centernot{\overset{B^{T_G}(G)}{\longleftrightarrow}} x_0\text{, }v_0 \overset{B^{T_G}(G)}{\longleftrightarrow} x_1\bigr) \text{ }\text{ } + \text{ }\text{ } \mathbb{P}\bigl(v_0 \overset{B^{T_G}(G)}{\longleftrightarrow} x_0\text{, }v_0 \overset{B^{T_G}(G)}{\longleftrightarrow} x_1\bigr),
\end{align*}
and as such,
\begin{center}
\(Q \quad  \vcentcolon= \quad \mathbb{P}\bigl(v_0 \overset{B^{T_G}(G)}{\longleftrightarrow} x_0\text{, }v_0 \centernot{\overset{B^{T_G}(G)}{\longleftrightarrow}} x_1\bigr) \quad \geq \quad \mathbb{P}\bigl(v_0 \centernot{\overset{B^{T_G}(G)}{\longleftrightarrow}} x_0\text{, }v_0 \overset{B^{T_G}(G)}{\longleftrightarrow} x_1\bigr) \quad =\vcentcolon \quad q\).
\end{center}\vspace{3pt}
Moreover, a mirroring argument implies the following identities:
\begin{center}
\(R \quad \vcentcolon= \quad \mathbb{P}\bigl(x_0 \overset{B^{T_H}(H)}{\longleftrightarrow} w_0\bigr) \quad = \quad \mathbb{P}\bigl(x_1 \overset{B^{T_H}(H)}{\longleftrightarrow} w_1\bigr)\)\text{, }
\end{center}
\begin{center}
\(r \quad \vcentcolon= \quad \mathbb{P}\bigl(x_0 \overset{B^{T_H}(H)}{\longleftrightarrow} w_1\bigr) \quad = \quad \mathbb{P}\bigl(x_1 \overset{B^{T_H}(H)}{\longleftrightarrow} w_0\bigr)\),
\end{center}
\begin{center}
\(c \quad \vcentcolon= \quad\mathbb{P}\bigl(\{x_0\text{, }x_1\} \overset{B^{T_H}(H)}{\longleftrightarrow} w_0\bigr) \quad= \quad \mathbb{P}\bigl(\{x_0\text{, }x_1\} \overset{B^{T_H}(H)}{\longleftrightarrow} w_1\bigr)\).
\end{center}\vspace{2pt}
Since we also assumed that \(\mathscr{B}(H, T_H, \vec{q}, x, w)\) holds, it follows that \(R\geq r\).\vspace{4pt}\\
Note that every self-avoiding path from \(v _0\) to \(w_0\) in \(B(G\cup T)\) must visit atleast one copy of \(x\), and once it reached a vertex from \(\bigl(V(H)\setminus\{x\}\bigr) \times \{0,1\}\), it can never visit any vertices from \(\bigl(V(G)\setminus\{x\}\bigr) \times \{0,1\}\) again, because otherwise it would eventually have to visit one of the copies of \(x\) a second time. As such, any self-avoiding path from \(v_0\) to \(w_0\) in \(B(G\cup H)\) can be decomposed into a first section connecting \(v_0\) with some copy of \(x\) while never leaving \(B(G)\) and a second section connecting this copy of \(x\) with \(w_0\) while never leaving \(B(H)\). Since \(B^{T_G}(G)\) and \(B^{T_H}(H)\) are independent, this shows the following decomposition:
\begin{align*}
\mathbb{P}\bigl( \overset{B^T(G\text{ }\cup\text{ } H)}{v_0 \text{ }\text{ }\longleftrightarrow\text{ }\text{ }w_0}\bigr) \quad =& \quad \mathbb{P}\bigl(v_0 \overset{B^{T_G}(G)}{\longleftrightarrow} x_0\text{, }v_0 \overset{B^{T_G}(G)}{\longleftrightarrow} x_1\bigr) \text{ }\text{ } \cdot \text{ }\text{ } \mathbb{P}\bigl(\{x_0\text{, }x_1\} \overset{B^{T_H}(H)}{\longleftrightarrow} w_0\bigr)\\
+& \quad \mathbb{P}\bigl(v_0 \overset{B^{T_G}(G)}{\longleftrightarrow} x_0\text{, }v_0 \centernot{\overset{B^{T_G}(G)}{\longleftrightarrow}} x_1\bigr) \text{ }\text{ } \cdot \text{ }\text{ } \mathbb{P}\bigl(x_0 \overset{B^{T_H}(H)}{\longleftrightarrow} w_0\bigr)\\
+& \quad \mathbb{P}\bigl(v_0 \centernot{\overset{B^{T_G}(G)}{\longleftrightarrow}} x_0\text{, }v_0 \overset{B^{T_G}(G)}{\longleftrightarrow} x_1\bigr) \text{ }\text{ } \cdot \text{ }\text{ } \mathbb{P}\bigl(x_1 \overset{B^{T_H}(H)}{\longleftrightarrow} w_0\bigr),
\end{align*}
or according to the definitions above,\vspace{2pt}
\begin{center}
\(\mathbb{P}\bigl( \overset{B^T(G\text{ }\cup\text{ } H)}{v_0 \text{ }\text{ }\longleftrightarrow\text{ }\text{ }w_0}\bigr) \quad = \quad \mathbb{P}\bigl(v_0 \overset{B^{T_G}(G)}{\longleftrightarrow} x_0\text{, }v_0 \overset{B^{T_G}(G)}{\longleftrightarrow} x_1\bigr) \cdot c \quad + \quad Q \cdot R \quad + \quad q \cdot r\).
\end{center}\vspace{3pt}
Using similar reasoning, it also follows that
\begin{align*}
\mathbb{P}\bigl( \overset{B^T(G\text{ }\cup\text{ } H)}{v_0 \text{ }\text{ }\longleftrightarrow\text{ }\text{ }w_1}\bigr) \quad =& \quad \mathbb{P}\bigl(v_0 \overset{B^{T_G}(G)}{\longleftrightarrow} x_0\text{, }v_0 \overset{B^{T_G}(G)}{\longleftrightarrow} x_1\bigr) \text{ }\text{ } \cdot \text{ }\text{ } \mathbb{P}\bigl(\{x_0\text{, }x_1\} \overset{B^{T_H}(H)}{\longleftrightarrow} w_1\bigr)\\
+& \quad \mathbb{P}\bigl(v_0 \overset{B^{T_G}(G)}{\longleftrightarrow} x_0\text{, }v_0 \centernot{\overset{B^{T_G}(G)}{\longleftrightarrow}} x_1\bigr) \text{ }\text{ } \cdot \text{ }\text{ } \mathbb{P}\bigl(x_0 \overset{B^{T_H}(H)}{\longleftrightarrow} w_1\bigr)\\
+& \quad \mathbb{P}\bigl(v_0 \centernot{\overset{B^{T_G}(G)}{\longleftrightarrow}} x_0\text{, }v_0 \overset{B^{T_G}(G)}{\longleftrightarrow} x_1\bigr) \text{ }\text{ } \cdot \text{ }\text{ } \mathbb{P}\bigl(x_1 \overset{B^{T_H}(H)}{\longleftrightarrow} w_1\bigr),
\end{align*}
which we restate as\vspace{2pt}
\begin{center}
\(\mathbb{P}\bigl( \overset{B^T(G\text{ }\cup\text{ } H)}{v_0 \text{ }\text{ }\longleftrightarrow\text{ }\text{ }w_1}\bigr) \quad = \quad \mathbb{P}\bigl(v_0 \overset{B^{T_G}(G)}{\longleftrightarrow} x_0\text{, }v_0 \overset{B^{T_G}(G)}{\longleftrightarrow} x_1\bigr) \cdot c \quad + \quad Q \cdot r \quad + \quad q \cdot R\).
\end{center}\vspace{3pt}
In particular, it follows that\vspace{2pt}
\begin{center}
\(\mathbb{P}\bigl( \overset{B^T(G\text{ }\cup\text{ } H)}{v_0 \text{ }\text{ }\longleftrightarrow\text{ }\text{ }w_0}\bigr) \text{ }-\text{ }\mathbb{P}\bigl( \overset{B^T(G\text{ }\cup\text{ } H)}{v_0 \text{ }\text{ }\longleftrightarrow\text{ }\text{ }w_1}\bigr) \text{ } = \text{ } QR \text{ }+\text{ } qr \text{ }-\text{ } Qr \text{ }-\text{ } qR \text{ }= \text{ }(Q - q)(R - r)\text{ }\geq \text{ } 0\),
\end{center}\vspace{3pt}
as desired.
\end{proof}
\noindent Now we show how to extract the original theorem from the special case above.
\begin{proof} (Of \textbf{Theorem 1}.)\vspace{1pt}\\
Assume w.l.o.g. that \(G\) is connected. We show that the stated equivalence holds via strong induction over \(n \vcentcolon= \vert V(G)\vert\in \mathbb{N}_0\). The cases \(n = 0\) and \(n = 1\) are both trivial.\vspace{6pt}\\
Let \(n \geq 2\) and suppose that the equivalence is proved for all \(\widetilde{n} \leq n-1\). We want to show that the equivalence holds for \(n\) as well. As \(G\) is connected and \(n \geq 2\), we have \(\vert\mathbb{B}(G)\vert \geq 1\).\vspace{6pt}\\
If \(\vert \mathbb{B}(G)\vert = 1\), then \(G\) is itself its only block and the desired equivalence is a trivial tautology.\vspace{6pt}\\
If \(\vert\mathbb{B}(G)\vert \geq 2\), then in particular, \(G\) is not biconnected. Therefore, there exists an \(x\in V(G)\) such that \(G\setminus x\) has atleast two different connected components \(U_1\text{, ... , }U_k\) for some \(k \geq 2\). Let \(G_1\)\text{, ... , }\(G_k\) denote the graphs induced by \(G\) onto \(V(U_1)\) \(\cup\) \(\{x\}\)\text{, ... , }\(V(U_k)\) \(\cup\) \(\{x\}\), respectively, which only overlap at the vertex \(x\). Since \(k\geq2\), we have \(\vert V(G_i)\vert \leq \vert V(G)\vert - 1\) for every \(i\in\{1\), ... , \(k\}\), and by considering the cutset \(\{x\}\) it is easy to see that any block of any \(G_i\) is also a block of \(G\). On the other hand, the cutset \(\{x\}\) also shows that every block of \(G\) is in turn a block of some \(G_i\) as well, so it follows that \(\mathbb{B}(G)\) \(=\) \(\mathbb{B}(G_1)\) \(\cup\) ... \(\cup\) \(\mathbb{B}(G_k)\).\vspace{6pt}\\
Since \(G\text{ }=\text{ }G_1\) \(\cup\) ... \(\cup\) \(G_k\) and the \(G_i\) pairwise do not overlap at any vertex other than \(x\), \(k-1\) applications of \textbf{Theorem 1b} show that the statement
\begin{center}\(\forall T\subseteq V(G)\):\text{ } \(\forall v,w\in V(G)\):\text{ } \(\text{ } \mathscr{B}(G, T, \vec{p}, v, w)\)\end{center}
is equivalent to the statement
\begin{center}\(\forall i\in\{1\text{, ... , }k\}\): \(\text{ } \forall T\subseteq V(G_i)\): \(\text{ } \forall v,w\in V(G_i)\): \(\text{ } \mathscr{B}(G_i, T, (p_e)_{e\in E(G_i)}, v, w)\).\end{center}
Since all \(G_i\) have at most \(n-1\) vertices, it follows by the induction hypothesis that the statement above in turn is equivalent to
\begin{center}\(\forall  i\in\{1\text{, ... , }k\}\): \(\text{ } \forall H\in \mathbb{B}(G_i)\): \(\text{ } \forall T\subseteq V(H)\): \(\text{ } \forall v,w\in V(H)\): \(\text{ } \mathscr{B}(H, T, (p_e)_{e\in E(H)}, v, w)\).\end{center}
Finally, since \(\mathbb{B}(G)\) \(=\) \(\mathbb{B}(G_1)\) \(\cup\) ... \(\cup\) \(\mathbb{B}(G_k)\), the statement above is equivalent to
\begin{center}\(\forall H\in \mathbb{B}(G)\): \(\text{ } \forall T\subseteq V(H)\): \(\text{ } \forall v,w\in V(H)\): \(\text{ } \mathscr{B}(H, T, (p_e)_{e\in E(H)}, v, w)\),\end{center}
concluding the proof.
\end{proof}\vspace{6pt}
\noindent In the following, we show the strong bunkbed conjecture for (slightly generalized) cactus graphs; in particular, we prove \textbf{Theorem 2}. We start with some technical statements.\vspace{4pt}
\begin{A}
Let \(G\) be a graph, \(T \subseteq V(G)\), \(\vec{p}\in [0,1]^{E(G)}\) and \(v,w \in V(G)\).\\
If \(v \in T\) or \(w \in T\) or \(\vert T \vert \in \{0,1\}\), then \(\mathscr{B}(G, T, \vec{p}, v, w)\) holds.\vspace{6pt}
\end{A}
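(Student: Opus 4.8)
The plan is to split into cases according to which clause of the hypothesis holds, disposing of the three easy cases first and isolating the one genuinely substantive situation. Throughout I write $B^T := B^T(G,\vec p)$, and I use the fact that the two bunks of $B^T$ are vertex-disjoint apart from the posts $u_0u_1$ with $u\in T$, together with the mirroring identity $\mathbb{P}\bigl(v_i \overset{B^T}{\longleftrightarrow} w_j\bigr)=\mathbb{P}\bigl(v_{1-i} \overset{B^T}{\longleftrightarrow} w_{1-j}\bigr)$ recorded before the proof of \textbf{Theorem 1b}.

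First I would treat $v\in T$: then the post $v_0v_1$ is deterministically present, so $v_0$ and $v_1$ always lie in the same component of $B^T$, whence the events $\{v_1 \overset{B^T}{\longleftrightarrow} w_0\}$ and $\{v_0 \overset{B^T}{\longleftrightarrow} w_0\}$ coincide. Combining this with the mirroring identity $\mathbb{P}\bigl(v_0 \overset{B^T}{\longleftrightarrow} w_1\bigr)=\mathbb{P}\bigl(v_1 \overset{B^T}{\longleftrightarrow} w_0\bigr)$ gives $\mathbb{P}\bigl(v_0 \overset{B^T}{\longleftrightarrow} w_1\bigr)=\mathbb{P}\bigl(v_0 \overset{B^T}{\longleftrightarrow} w_0\bigr)$, so $\mathscr{B}$ holds with equality. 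The case $w\in T$ is even quicker: the post $w_0w_1$ forces $w_0$ and $w_1$ into one component, so $\{v_0 \overset{B^T}{\longleftrightarrow} w_0\}=\{v_0 \overset{B^T}{\longleftrightarrow} w_1\}$ directly, again with equality. Finally, if $\vert T\vert=0$ there are no posts at all, the two bunks are wholly disconnected, and $\mathbb{P}\bigl(v_0 \overset{B^T}{\longleftrightarrow} w_1\bigr)=0\le\mathbb{P}\bigl(v_0 \overset{B^T}{\longleftrightarrow} w_0\bigr)$.

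The remaining case is $\vert T\vert=1$, say $T=\{u\}$ with $u\notin\{v,w\}$ (otherwise a previous case applies). Here the single post $u_0u_1$ is the only way to change bunks, so I would establish two structural facts about self-avoiding paths. Any $v_0$–$w_0$ path that crossed the post would be trapped in bunk $1$ thereafter and could never reach $w_0$; hence every such path stays in bunk $0$, and $\{v_0 \overset{B^T}{\longleftrightarrow} w_0\}$ equals the event that $v$ and $w$ are joined in the single percolated copy of $G$ on bunk $0$, whose probability I denote $\pi(v,w)$. Dually, any $v_0$–$w_1$ path must traverse the post exactly once, splitting into a bunk-$0$ segment from $v_0$ to $u_0$ and an independent bunk-$1$ segment from $u_1$ to $w_1$; since the two bunks are independent, this yields $\mathbb{P}\bigl(v_0 \overset{B^T}{\longleftrightarrow} w_1\bigr)=\pi(v,u)\,\pi(u,w)$. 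The claim thus reduces to the percolation inequality $\pi(v,w)\ge\pi(v,u)\,\pi(u,w)$.

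This last inequality is the crux and the point at which I expect to invoke an external tool. The events $\{v\leftrightarrow u\}$ and $\{u\leftrightarrow w\}$ are both increasing in the edge configuration, so the Harris–FKG inequality gives $\mathbb{P}(v\leftrightarrow u,\ u\leftrightarrow w)\ge\pi(v,u)\,\pi(u,w)$; and since their intersection is contained in $\{v\leftrightarrow w\}$ by transitivity of connectivity, $\pi(v,w)\ge\mathbb{P}(v\leftrightarrow u,\ u\leftrightarrow w)\ge\pi(v,u)\,\pi(u,w)$, which closes the argument. The only delicate points are the two path-decomposition claims, both of which rest on $u$ being the unique post-bearing vertex and on self-avoidance forbidding a second crossing of the post.
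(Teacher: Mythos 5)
Your proof is correct and essentially coincides with the paper's: the easy cases are dispatched the same way (the paper likewise handles \(v \in T\) by a mirroring argument), and in the substantive case \(T=\{u\}\) the paper performs the identical reduction \(\mathbb{P}\bigl(v_0 \overset{B^T}{\longleftrightarrow} w_1\bigr) = \pi(v,u)\,\pi(u,w)\) via independence and identical distribution of the two bunks, then closes with Harris' inequality applied to the two increasing connection events exactly as you do. The only cosmetic difference is that your first path-decomposition claim is slightly stronger than needed: the paper never asserts the equality \(\mathbb{P}\bigl(v_0 \overset{B^T}{\longleftrightarrow} w_0\bigr)=\pi(v,w)\), using only the containment \(\{v_0 \overset{B^T_0}{\longleftrightarrow} u_0\} \cap \{u_0 \overset{B^T_0}{\longleftrightarrow} w_0\} \subseteq \{v_0 \overset{B^T}{\longleftrightarrow} w_0\}\) for the final estimate.
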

\begin{proof}
The cases \(w \in T\) and \(\vert T \vert = 0\) are both trivial, and the case \(v \in T\) can be dealt with using a simple mirroring argument. Assume now that \(T = \{x\}\) for some \(x \in V(G)\setminus\{v,w\}\). Consider a random graph \(B^T \vcentcolon= B^T(G, \vec{p})\) as usual. Let \(B^T_0\) and \(B^T_1\) denote the independent random graphs that \(B^T\) induces onto \(V(G) \times \{0\}\) and \(V(G) \times \{1\}\), respectively. We will use Harris' inequality (see \emph{Lemma 4.1} and its corollary from \cite{harris1960lower}, which can be proved similarly for any arbitrary finite graph). Since the distributions of \(B^T_0\) and \(B^T_1\) are essentially the same (apart from living on different bunks), we get that\vspace{8pt}\\
\text{ }\text{ }\quad\quad\(\text{ }\mathbb{P}\bigl( v_0 \overset{B^T}{\longleftrightarrow} w_1\bigr) \text{ }\text{ } \overset{\overset{T=\{x\}}{\text{ }}}{=} \text{ }\text{ } \mathbb{P}\bigl( v_0 \overset{B^T_0}{\longleftrightarrow} x_0\text{, }x_1 \overset{B^T_1}{\longleftrightarrow} w_1\bigr) \text{ }\text{ } = \text{ }\text{ } \mathbb{P}\bigl( v_0 \overset{B^T_0}{\longleftrightarrow} x_0\bigr)\mathbb{P}\bigl( x_1 \overset{B^T_1}{\longleftrightarrow} w_1\bigr)\)\vspace{6pt}\\
\text{ }\(=\text{ }\text{ } \mathbb{P}\bigl( v_0 \overset{B^T_0}{\longleftrightarrow} x_0\bigr)\mathbb{P}\bigl( x_0 \overset{B^T_0}{\longleftrightarrow} w_0\bigr) \text{ } \overset{\text{Harris}}{\leq} \text{ }  \mathbb{P}\bigl( v_0 \overset{B^T_0}{\longleftrightarrow} x_0\text{, } x_0 \overset{B^T_0}{\longleftrightarrow} w_0\bigr) \text{ }\text{ } \leq \text{ }\text{ } \mathbb{P}\bigl( v_0 \overset{B^T}{\longleftrightarrow} w_0\bigr)\).
\end{proof}\vspace{10pt}
The next lemma uses ideas that are also found in \cite{richthammer2022bunkbed}.\pagebreak\\
For a graph \(G\) and an edge \(e\in E(G)\), let \(G\setminus e\) denote the graph resulting from deleting \(e\) from \(G\), i.e. \(V(G\setminus e)\vcentcolon= V(G)\) and \(E(G\setminus e) \vcentcolon = E(G)\setminus \{e\}\).
\begin{B}
Let \(G\) be a graph, \(T \subseteq V(G)\), \(\vec{p} = (p_e)_{e\in E(G)} \in [0,1]^{E(G)}\) and \(v,w \in V(G)\).\\
If  \(e^\ast \vcentcolon= vw\in E(G)\) \emph{(}i.e. \(v\) and \(w\) are neighbours\emph{)} and \(\mathscr{B}(G\setminus e^\ast, T, (p_e)_{e\in E(G)\setminus \{e^\ast\}}, v, w)\) holds,
then \(\mathscr{B}(G, T, \vec{p}, v, w)\) holds as well.\vspace{6pt}
\end{B}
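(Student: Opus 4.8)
The plan is to condition on everything except the two horizontal copies of the new edge and to compare the resulting conditional connection probabilities; throughout I abbreviate connectivity in the relevant random graph by \(\leftrightarrow\). Write \(p \vcentcolon= p_{e^\ast}\) and let \(B' \vcentcolon= B^{T}(G\setminus e^\ast, (p_e)_{e\in E(G)\setminus\{e^\ast\}})\). Then \(B^{T}(G,\vec p)\) arises from \(B'\) by adding the two horizontal edges \(e^\ast_0 = v_0w_0\) and \(e^\ast_1 = v_1w_1\) independently, each present with probability \(p\) and independent of \(B'\). First I would condition on the realization of \(B'\). Since adding \(e^\ast_0\) merely identifies \(v_0\) with \(w_0\) and adding \(e^\ast_1\) merely identifies \(v_1\) with \(w_1\), both events \(\{v_0\leftrightarrow w_0\}\) and \(\{v_0\leftrightarrow w_1\}\) in \(B^{T}(G)\) depend on \((e^\ast_0,e^\ast_1)\) only through the partition that \(B'\) induces on the four vertices \(v_0,v_1,w_0,w_1\). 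Averaging over the four states of \((e^\ast_0,e^\ast_1)\), the conditional probabilities \(P \vcentcolon= \mathbb{P}(v_0\leftrightarrow w_0 \mid B')\) and \(Q \vcentcolon= \mathbb{P}(v_0\leftrightarrow w_1 \mid B')\) become explicit functions of this partition.

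Let \(a\) and \(b\) denote the indicators of \(\{v_0\leftrightarrow w_0\}\) and \(\{v_0\leftrightarrow w_1\}\) in \(B'\). The heart of the proof is the pointwise estimate
\[P - Q \;\geq\; (1-p)\,(a-b),\]
claimed to hold for every realization of \(B'\). Granting this, I would take expectations: by the tower property \(\mathbb{E}[P]\) and \(\mathbb{E}[Q]\) are the two connection probabilities in \(B^{T}(G)\), whereas \(\mathbb{E}[a-b] = \mathbb{P}(v_0\leftrightarrow w_0 \text{ in } B') - \mathbb{P}(v_0\leftrightarrow w_1 \text{ in } B')\) is nonnegative precisely by the hypothesis \(\mathscr{B}(G\setminus e^\ast, T, (p_e)_{e\neq e^\ast}, v, w)\). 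As \(1-p \geq 0\), this yields \(\mathbb{P}(v_0\leftrightarrow w_0) - \mathbb{P}(v_0\leftrightarrow w_1) = \mathbb{E}[P-Q] \geq (1-p)\,\mathbb{E}[a-b] \geq 0\), which is exactly \(\mathscr{B}(G, T, \vec p, v, w)\).

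To establish the pointwise estimate I would first record the easy half: conditioning on \(e^\ast_0\) gives \(P = p + (1-p)\bigl[(1-p)a + p f\bigr]\), where \(f\) is the indicator that \(v_0 \leftrightarrow w_0\) in \(B'\cup\{e^\ast_1\}\); since \(f \geq a\) this simplifies to \(P - (1-p)a = p\bigl[1 + (1-p)(f-a)\bigr] \geq p\). The upper bound on \(Q\) is where the difficulty concentrates: one cannot bound \(P\) and \(Q\) separately, because the naive estimate \(Q \leq p + (1-p)b\) already fails when \(B'\) connects \(v_0\) to \(v_1\) and \(w_0\) to \(w_1\) in two separate components with \(v_0\not\leftrightarrow w_1\). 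Instead I would expand \(Q\) by conditioning on both \(e^\ast_0\) and \(e^\ast_1\); the claimed inequality \(P-(1-p)a \geq Q-(1-p)b\) then reduces, after cancelling a factor \(p\), to a single scalar inequality of the shape \((1-p)(1+K) + p(1-m) \geq 0\), where \(m\in\{0,1\}\) records whether \(\{v_0,w_0\}\) and \(\{v_1,w_1\}\) share a component of \(B'\) and \(K\) is an explicit integer combination of the connectivity indicators of \(v_0,v_1,w_0,w_1\). Both summands are nonnegative once one verifies the bound \(K \geq -1\), which I would check by a short case distinction on whether \(v_0\leftrightarrow w_1\) and on how \(v_0,v_1,w_0,w_1\) split into components.

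The main obstacle is exactly this joint comparison of \(P\) and \(Q\). The two copies of \(e^\ast\) act asymmetrically — the bunk-\(0\) copy directly serves the same-bunk target \(w_0\), while the bunk-\(1\) copy may help either target — so there is no term-by-term domination, and the estimate must be carried out over all connectivity patterns of the four distinguished vertices simultaneously. Everything else (the conditioning, the independence of \(B'\) from \(e^\ast_0,e^\ast_1\), and the final appeal to the hypothesis via the tower property) is routine.
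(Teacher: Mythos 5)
Your proof is correct, and it takes a genuinely different route from the paper's. The paper splits according to the two special edges rather than everything else: it conditions on the event \(A\) that at least one of \(e^\ast_0, e^\ast_1\) is present. On \(A^c\) the conditioned graph is exactly \(B^T(G\setminus e^\ast)\), so the hypothesis applies verbatim; on \(A\) the inequality holds unconditionally, because at least one of \(v_0\leftrightarrow w_0\), \(v_1\leftrightarrow w_1\) occurs, so the two-point patterns \(\{v_0,w_1\}\) and \(\{v_1,w_0\}\) have conditional probability zero, and the mirror symmetry of \(A\) lets the three- and four-point terms cancel in the symmetrized difference, leaving a sum of two manifestly nonnegative terms. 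You instead condition on the configuration \(B'\) off the two copies of \(e^\ast\) and prove the quenched bound \(P-Q\geq(1-p)(a-b)\), then average; the hypothesis enters only through \(\mathbb{E}[a-b]\geq 0\), exactly parallel to the paper using it only on \(A^c\). I checked your key estimate: writing \(s\), \(t\), \(c\) for the indicators of \(v_0\leftrightarrow v_1\), \(w_0\leftrightarrow w_1\), \(v_1\leftrightarrow w_0\) in \(B'\), your reduction is exact with \(K=f-a+b-\max(b,t)-\max(b,s)\) and \(m=\max(s,b,c,t)\), and the resulting inequality holds over all achievable partitions of \(\{v_0,v_1,w_0,w_1\}\), with equality for instance at the patterns \(\{v_0,w_1\}\), \(\{v_0,v_1,w_0\}\), \(\{v_0,v_1,w_1\}\) and \(\{v_0,v_1\},\{w_0,w_1\}\) \(-\) so your bound is tight, confirming your remark that \(P\) and \(Q\) cannot be dominated term by term. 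One point you should make explicit when fleshing out the case distinction: \(K\geq-1\) is false for arbitrary \(\{0,1\}\)-assignments of the indicators (e.g. \(a=1\), \(b=0\), \(s=t=1\) gives \(K=-2\)); it is rescued precisely by transitivity, which forbids that pattern and forces \(a=0\), \(f=1\), hence \(K=-1\), in the only dangerous case \(b=0\), \(s=t=1\). Your phrasing in terms of component splits implicitly covers this, but the appeal to consistency with a genuine partition is where the real content of the case check lies. As for what each approach buys: the paper's conditioning-plus-mirroring is shorter and needs no case analysis at all, while your argument is more elementary (a finite verification over connectivity patterns) and quantitatively stronger, since it yields a pointwise inequality with explicit deficit \((1-p)(a-b)\) rather than only an averaged one.
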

\begin{proof}
Let \(B^T(G) \vcentcolon= B^T(G,\vec{p})\) denote a random graph as usual. Furthermore, we will let \(B^T(G\setminus e^\ast) \vcentcolon = B^T(G\setminus e^\ast, (p_e)_{e\in E(G)\setminus\{e^\ast\}})\) denote the random subgraph of \(B(G\setminus e^\ast)\) that is cut down from \(B^T(G)\) in the sense that an edge from \(B(G\setminus e^\ast)\) is present in \(B^T(G\setminus e^\ast)\) if and only if it is present in \(B^T(G)\). Moreover, we abbreviate the event 
\begin{center}
\(A \quad \vcentcolon=\quad \{\)At least one of the two edges \( e^{\ast}_0\) and \(e^{\ast}_1\) is present in \(B^T(G)\)\(\}\).
\end{center}
We will consider two cases: By the law of total probability, the inequality \(\mathscr{B}(G, T, \vec{p}, v, w)\) will follow if we can show that both
\begin{center}\(\mathbb{P}\bigl( v_0 \overset{B^T(G)}{\longleftrightarrow} w_0\mid A\bigr)\quad \overset{!}{\geq} \quad\mathbb{P}\bigl( v_0 \overset{B^T}{\longleftrightarrow} w_1\mid A\bigr)\)\end{center}
and
\begin{center}\(\mathbb{P}\bigl( v_0 \overset{B^T(G)}{\longleftrightarrow} w_0\mid A^c\bigr)\quad \overset{!}{\geq} \quad\mathbb{P}\bigl( v_0 \overset{B^T}{\longleftrightarrow} w_1\mid A^c\bigr)\)\end{center}
hold. (Note that if \(\mathbb{P}(A) \in \{0,1\}\), then we only need to consider the well-defined case.) By the assumption, \(\mathscr{B}(G\setminus e^\ast, T, (p_e)_{e\in E(G)\setminus \{e^\ast\}}, v, w)\) holds. If \(\mathbb{P}(A) \neq 1\), we instantly get that
\begin{center}
\(\mathbb{P}\bigl( v_0 \overset{B^T(G)}{\longleftrightarrow} w_0 \mid A^c\bigr) \text{ }=\text{ }\mathbb{P}\bigl( v_0 \overset{B^T(G\setminus e^\ast)}{\longleftrightarrow} w_0\bigr) \text{ }\geq\text{ } \mathbb{P}\bigl( v_0 \overset{B^T(G\setminus e^\ast)}{\longleftrightarrow} w_1\bigr)\text{ }= \text{ }\mathbb{P}\bigl( v_0 \overset{B^T(G)}{\longleftrightarrow} w_1 \mid A^c\bigr)\).
\end{center}\vspace{2pt}
In order to conclude the proof, all that is left to show is that if \(\mathbb{P}(A) \neq 0\), then
\begin{center}\(d \text{ }\text{ }\vcentcolon=\text{ }\text{ } \mathbb{P}\bigl( u_0 \overset{B^T(G)}{\longleftrightarrow} v_0 \mid A\bigr) \text{ }- \text{ } \mathbb{P}\bigl( u_0 \overset{B^T(G)}{\longleftrightarrow} v_1 \mid A\bigr) \text{ }\text{ }\overset{!}{\geq} \text{ }\text{ }0\).
\end{center}\vspace{2pt}
 To do that, we abbreviate \(B^T \vcentcolon= B^T(G)\) again and for every \(W \subseteq \{v_0, v_1, w_0, w_1\}\), we define\vspace{10pt}\\
\text{ }\text{ }\text{ }\quad\quad\quad\quad\(Q(W) \text{ }\text{ } \vcentcolon= \text{ }\text{ } \mathbb{P}\Bigl(\substack{\text{ In \(B^T\), all the vertices in \(W\) are connected with each other, }\\\text{ but not with any of the other vertices in \( \{v_0, v_1, w_0, w_1\}\setminus W\)}}\mid A\text{ }\Bigr)\).\vspace{10pt}\\
Using this notation, we can decompose several connection probabilities in the following way:\vspace{6pt}\\
\(\mathbb{P}\bigl( v_0 \overset{B^T}{\longleftrightarrow} w_0\mid A\bigr) \text{ }=\text{ } Q(\{v_0,w_0\}) +  \colorbox{sun}{\(Q(\{v_0,v_1,w_0\})\)} +  \colorbox{lavenderblush}{\(Q(\{v_0,w_0,w_1\})\)} + \colorbox{apricot}{\(Q(\{v_0,v_1,w_0,w_1\})\)}\)\vspace{6pt}\\
\(\mathbb{P}\bigl( v_1 \overset{B^T}{\longleftrightarrow} w_1\mid A\bigr) \text{ }=\text{ } Q(\{v_1,w_1\}) + \colorbox{pastelyellow}{\( Q(\{v_0,v_1,w_1\})\)}  + \colorbox{bubbles}{\(Q(\{v_1,w_0,w_1\})\)} + \colorbox{apricot}{\(Q(\{v_0,v_1,w_0,w_1\})\)}\)\vspace{6pt}\\
\(\mathbb{P}\bigl( v_0 \overset{B^T}{\longleftrightarrow} w_1\mid A\bigr) \text{ }=\text{ } Q(\{v_0,w_1\}) + \colorbox{pastelyellow}{\(Q(\{v_0,v_1,w_1\})\)}+ \colorbox{lavenderblush}{\(Q(\{v_0,w_0,w_1\})\)} +  \colorbox{apricot}{\(Q(\{v_0,v_1,w_0,w_1\})\)}\)\vspace{6pt}\\
\(\mathbb{P}\bigl( v_1 \overset{B^T}{\longleftrightarrow} w_0\mid A\bigr) \text{ }=\text{ } Q(\{v_1,w_0\}) + \colorbox{sun}{\(Q(\{v_0,v_1,w_0\})\)} +  \colorbox{bubbles}{\(Q(\{v_1,w_0,w_1\})\)} + \colorbox{apricot}{\(Q(\{v_0,v_1,w_0,w_1\})\)}\)\vspace{10pt}\\
Under the condition of event \(A\), either \(v_0\) is connected to \(w_0\) or \(v_1\) is connected to \(w_1\) in \(B^T\). In particular, it follows that \( Q(\{v_0,w_1\}) = Q(\{v_1,w_0\}) = 0\). Since \(A\) is invariant under mirroring every horizontal edge \(e_i\) with \(e_{1-i}\), we may carefully cancel out terms to see that
\[\text{ }\text{ }2d\text{ }=\text{ }\mathbb{P}\bigl( v_0 \overset{B^T}{\longleftrightarrow} w_0\mid A\bigr)\text{ }+\text{ }\mathbb{P}\bigl( v_1 \overset{B^T}{\longleftrightarrow} w_1\mid A\bigr)\text{ }-\text{ }\mathbb{P}\bigl( v_0 \overset{B^T}{\longleftrightarrow} w_1\mid A\bigr)\text{ }-\text{ }\mathbb{P}\bigl( v_1 \overset{B^T}{\longleftrightarrow} w_0\mid A\bigr)\text{ }\]
\[\text{ }=\text{ }Q(\{v_0,w_0\})\text{ }\text{ }\quad\quad+\text{ }Q(\{v_1,w_1\})\text{ }\text{ }\text{ }\quad\quad-\text{ }Q(\{v_0,w_1\})\text{ }\text{ }\quad\quad-\text{ }Q(\{v_1,w_0\})\quad\vspace{4pt}\]
\[\text{ }=\text{ }Q(\{v_0,w_0\})\text{ }\text{ }\quad\quad+\text{ }Q(\{v_1,w_1\})\vspace{4pt}\quad\quad\quad\quad\quad\quad\quad\quad\quad\quad\quad\quad\quad\quad\quad\quad\quad\quad\quad\text{ }\text{ }\text{ }\]
\(\quad\text{ }\text{ }\text{ }\geq\text{ }0\).
\end{proof}
Now, we want to deal with the case of \(T\) being a vertex seperator for \(v\) and \(w\), which is essentially the setting of the following lemma. For a given graph \(G\) and a set \(W \subseteq V(G)\), let \(\partial_GW \vcentcolon= \{x\in W \mid\exists y\in V(G)\setminus W\): \(xy \in E(G)\}\) denote the \emph{inner boundary} of \(W\) in \(G\).\vspace{1pt}
\begin{C}
Let \(G\) be a graph, \(T \subseteq V(G)\), \(\vec{p} \in [0,1]^{E(G)}\) and \(v,w \in V(G)\). Moreover, let \(B^T \vcentcolon= B^T(G,\vec{p})\) denote a random graph as usual.\\
If there exists a set \(W \subseteq V(G)\) such that \(v \notin W\), \(w \in W\) and \(\partial_GW \subseteq T\), then \begin{center}\(\mathbb{P}\bigl( v_0 \overset{B^T}{\longleftrightarrow} w_0\bigr) \text{ }\text{ } = \text{ }\text{ } \mathbb{P}\bigl( v_0 \overset{B^T}{\longleftrightarrow} w_1\bigr)\).\end{center}
In particular, \(\mathscr{B}(G, T, \vec{p}, v, w)\) holds in this case.\vspace{6pt}
\end{C}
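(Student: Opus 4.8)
The plan is to produce a measure-preserving involution on the random horizontal edges that exchanges the two events $\{v_0 \overset{B^T}{\longleftrightarrow} w_0\}$ and $\{v_0 \overset{B^T}{\longleftrightarrow} w_1\}$, so that these events have equal probability. First I would classify the edges of $B(G)$ relative to $W$: call a horizontal edge $e = xy$ \emph{internal} if $x,y\in W$, \emph{external} if $x,y\notin W$, and \emph{crossing} otherwise; the endpoint of a crossing edge that lies in $W$ has a neighbour outside $W$ and therefore belongs to $\partial_G W \subseteq T$. I then define the transformation $\Phi$ that mirrors exactly the internal edges, i.e.\ that swaps the presence of $e_0$ and $e_1$ for every internal edge $e$ while leaving every crossing edge, every external edge and every (deterministic) post untouched. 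Since for each edge the two copies $e_0,e_1$ are i.i.d.\ with parameter $p_e$, this simultaneous coordinate swap preserves the distribution of $B^T$; this is just a mirroring argument applied to the subset of internal edges, and the excerpt already notes that mirroring only some edge copies leaves the law unchanged.

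Next I would introduce the vertex reflection $\psi$ that swaps the two bunks inside $W$ and fixes everything outside, so $\psi(x_i)=x_{1-i}$ for $x\in W$ and $\psi(x_i)=x_i$ for $x\notin W$. Crucially $\psi(v_0)=v_0$ (as $v\notin W$) while $\psi(w_0)=w_1$ (as $w\in W$), so $\psi$ is tailored to convert a $v_0$–$w_0$ connection into a $v_0$–$w_1$ connection. The heart of the argument is to verify that $\psi$ carries any open path of a configuration $\omega$ to an open walk of $\Phi(\omega)$ (a walk suffices to witness connectivity). I would check this step by step along the path: an internal step $x_iy_i$ becomes $x_{1-i}y_{1-i}=e_{1-i}$, which is present in $\Phi(\omega)$ precisely because $\Phi$ interchanged $e_i$ and $e_{1-i}$; external steps and posts inside $W$ are fixed by $\psi$ and untouched by $\Phi$, hence trivially preserved; and a crossing step $x_iy_i$ with $x\in\partial_G W$, $y\notin W$ is replaced by the detour $x_{1-i}\to x_i\to y_i$, which is legal because the post $x_0x_1$ is present ($x\in\partial_G W\subseteq T$) and the crossing edge $e_i$ is left unchanged by $\Phi$. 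By undirectedness the same detour handles a crossing traversed in the opposite direction.

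This repair of the reflected path at the moments it crosses $\partial W$ is exactly where the hypothesis $\partial_G W\subseteq T$ is indispensable, and it is the only delicate point; everything else is bookkeeping. Having shown $\Phi\bigl(\{v_0\overset{B^T}{\longleftrightarrow}w_0\}\bigr)\subseteq\{v_0\overset{B^T}{\longleftrightarrow}w_1\}$, I would run the identical reflection starting from a path from $v_0$ to $w_1$ (now $\psi(w_1)=w_0$) to obtain the reverse inclusion $\Phi\bigl(\{v_0\overset{B^T}{\longleftrightarrow}w_1\}\bigr)\subseteq\{v_0\overset{B^T}{\longleftrightarrow}w_0\}$. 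Because $\Phi$ is a measure-preserving involution, each inclusion yields an inequality between the two probabilities, and together they force $\mathbb{P}\bigl(v_0\overset{B^T}{\longleftrightarrow}w_0\bigr)=\mathbb{P}\bigl(v_0\overset{B^T}{\longleftrightarrow}w_1\bigr)$; the inequality $\mathscr{B}(G,T,\vec{p},v,w)$ then holds trivially.
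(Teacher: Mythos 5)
Your proposal is correct and follows essentially the same route as the paper: the paper likewise mirrors exactly the horizontal copies of edges with both endpoints in \(W\), observes that this partial mirroring preserves the law of \(B^T\), and converts a \(v_0\)--\(w_0\) path into a \(v_0\)--\(w_1\) connection by inserting the posts \(z_0z_1\), \(z\in\partial_GW\subseteq T\), wherever the path crosses the boundary. The only cosmetic differences are packaging: the paper decomposes the path into sections at copies of boundary vertices and reduces w.l.o.g.\ to \(w\in W\setminus\partial_GW\), while you phrase the same construction as a bunk-swapping reflection \(\psi\) repaired by post detours, yielding a walk rather than a path \(-\) which is equally sufficient to witness connectivity.
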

\begin{proof}
Assume w.l.o.g. that \(w \in W\setminus \partial_GW\). (Otherwise, the assumption of the lemma implies \(w \in T\) and the statement is trivial.) We prove the lemma with a refined mirroring argument: Let \(\widetilde{B^T}\) denote the random subgraph of \(B(G)\) that arises if we let every vertex of \(B(G)\) be present, let every post \(v_0v_1\) of \(B(G)\) be present deterministically if and only if \(v \in T\), let every horizontal edge \(e_i=x_iy_i\) of \(B(G)\) with either \(x \notin W\) or \(y \notin W\) be present if and only if it is present in \(B^T\) and let every horizontal edge \(e_i=x_iy_i\) of \(B(G)\) with \(x,y\in W\) be present if and only if \(e_{1-i}\) is present in \(B^T\). Again, \(B^T\) and \(\widetilde{B^T}\) are identically distributed, so it suffices to show that
\begin{center}\(\{v_0 \overset{B^T}{\longleftrightarrow} w_0\}\quad\overset{\text{!}}{=}\quad\{v_0 \overset{\widetilde{B^T}}{\longleftrightarrow} w_1\}\).\end{center}
We show "\(\subseteq\)\text{",} then "\(\supseteq\)\text{" }follows with the same argument.\\
Let \(\omega \in \{v_0 \overset{B^T}{\longleftrightarrow} w_0\}\) and let \(\gamma\) be a path from \(v_0\) to \(w_0\) in \(B^T(\omega)\). Then, for some \(n \in \mathbb{N}\), we decompose \(\gamma = \gamma_1\text{ } \circ\) ...  \(\circ\text{ } \gamma_n\) into sections \(\gamma_k\), where a new section starts whenever \(\gamma\) reaches the next copy of a vertex \(z \in \partial_GW\) (and formally, the end point of every \(\gamma_k\) is the starting point of \(\gamma_{k+1}\)). Since \(v \notin W\) and \(w \in W\setminus\partial_GW\), \(n \geq 2\). Note that the horizontal edges that any one section \(\gamma_k\) leads across are either all mirrored in \(\widetilde{B^T}(\omega)\) or none of them are mirrored at all, and in the first case, \(\gamma_k\) is a path in \(\widetilde{B^T}(\omega)\) as well, while in the second case, the path \(\widetilde{\gamma_k}\) that is mirrored from \(\gamma_k\) is a path in \(\widetilde{B^T}(\omega)\). Also note that \(\gamma_1\) leads across non-mirrored edges and \(\gamma_n\) leads across mirrored edges.  Let \(\eta_1 \vcentcolon= \gamma_1\), so \(\eta_1\) is a path in \(\widetilde{B^T}(\omega)\) that starts from \(v_0\). For \(k \in \{2\), ... , \(n-1\}\), if \(\gamma_k\) only leads across non-mirrored edges, we let \(\eta_k \vcentcolon= \gamma_k\). If \(\gamma_k\) leads over mirrored edges, let \(z, z'\in \partial_GW\) be the vertices such that \(\gamma_k\) starts from a copy of \(z\) and ends in a copy of \(z'\). Then, we define \(\eta_k\) to be the path that starts from the same vertex as \(\gamma_{k}\), leads across the post \(z_0z_1\), then follows \(\widetilde{\gamma_k}\) and then leads across the post \(z'_0z'_1\). Note that in both cases, \(\eta_k\) is now a path in \(\widetilde{B^T}(\omega)\) that starts and ends at the same vertices as \(\gamma_k\). Finally, we let \(\eta_n\) be the path that starts at the starting point \(u_i\) of \(\gamma_{n}\) (for some \(u \in \partial_GW\)), leads across the post \(u_0u_1\) and then follows the mirrored section \(\widetilde{\gamma_n}\). Note that now, since \(\gamma_n\) ends in \(w_0\), \(\eta_n\) ends in \(w_1\). Therefore, \(\eta \vcentcolon= \eta_1\text{ }\circ\) ... \(\circ\text{ }\eta_n\) is a path from \(v_0\) to \(w_1\) in \(\widetilde{B^T}(\omega)\) and as such, it follows that \(\omega \in \{v_0 \overset{\widetilde{B^T}}{\longleftrightarrow} w_1\}\).
\end{proof}\vspace{6pt}
Coming back to a little less abstract graphs, consider the smallest possible block one might encounter when applying \textbf{Theorem 1} and note the following triviality:
\begin{exa}
The complete graph \(K_2\) satisfies the strong bunkbed conjecture.\vspace{6pt}
\end{exa}
\begin{proof}
Let \(T \subseteq V(K_2)\) and \(v, w\in V(K_2)\). Assume w.l.o.g. that \(v \neq w\). Since \(\vert V(K_2)\vert = 2\), either \(v \in T\) or \(w \in T\) or \(T = \emptyset\). Thus, the desired result follows from \textbf{Lemma A}. 
\end{proof}
\begin{prop}
All tree graphs satisfy the strong bunkbed conjecture.\vspace{6pt}
\end{prop}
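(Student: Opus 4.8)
The plan is to reduce the claim to the base case of a single edge by means of \textbf{Theorem 1}. That theorem states that a graph satisfies the strong bunkbed conjecture if and only if each of its blocks does, so it is enough to identify the blocks of a tree and to invoke the result for those. I claim that every block of a tree is \emph{trivial}, i.e.\ isomorphic to \(K_2\); granting this, the proposition follows at once, since \textbf{Example 5} already established the strong bunkbed conjecture for \(K_2\).

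The work thus lies entirely in the purely graph-theoretic observation that the blocks of a tree are precisely its edges. The crux is that a biconnected graph on at least three vertices must contain a cycle: if \(H\) is connected and acyclic with \(\vert V(H)\vert \geq 3\), then \(H\) is itself a tree and therefore possesses a non-leaf vertex \(x\) (a tree on \(\geq 3\) vertices cannot consist of leaves alone), and deleting such an \(x\) disconnects \(H\), so \(H\setminus x\) is not connected and \(H\) fails to be biconnected. Consequently, were some block \(H\in\mathbb{B}(G)\) to have \(\vert V(H)\vert \geq 3\), it would contain a cycle; but \(H\) is a subgraph of the tree \(G\), and this cycle would then be a cycle in \(G\), contradicting acyclicity. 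Hence \(\vert V(H)\vert = 2\) for every \(H\in\mathbb{B}(G)\), and since a connected graph on two vertices is forced to be \(K_2\), every block of \(G\) is trivial.

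Putting the pieces together, each \(H\in\mathbb{B}(G)\) is isomorphic to \(K_2\) and hence satisfies the strong bunkbed conjecture by \textbf{Example 5}, so \textbf{Theorem 1} yields the same conclusion for the tree \(G\) itself. I anticipate no real obstacle here: the substance of the argument has already been supplied by the block-decomposition of \textbf{Theorem 1} and by the base case \textbf{Example 5} (which in turn rests on \textbf{Lemma A}), and the only remaining ingredient is the standard structural fact about blocks of trees recalled above. In this sense the proposition is essentially a corollary that specializes those results to the acyclic setting, and the same reasoning will reappear, with cycles as the one extra block type, in the proof of \textbf{Theorem 2}.
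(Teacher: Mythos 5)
Your proposal is correct and takes essentially the same approach as the paper: reduce via \textbf{Theorem 1} to the blocks of the tree, show every block is isomorphic to \(K_2\), and conclude with \textbf{Example 5}. The only cosmetic difference is the direction of the structural argument — you note that a connected acyclic graph on \(\geq 3\) vertices has a cut vertex and so cannot be biconnected, while the paper uses biconnectivity of the block \(H\) to find a path in \(H\setminus x\) between two neighbours of \(x\) and closes it to a cycle; these are contrapositive formulations of the same fact.
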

\begin{proof}
Let \(G\) be a tree and \(H \in \mathbb{B}(G)\). Note that \(H\) is itself a (biconnected) tree. If there existed an \(x\in V(H)\) with two neighbors \(y \neq z\) in \(H\), then the connected graph \(H\setminus x\) would still contain a self-avoiding path from \(y\) to \(z\). The two edges \(xy\) and \(xz\) would close this path to a cycle in \(H\), contradicting that \(H\) is a tree. It follows that \(H\cong K_2\), so by \textbf{Example 5}, \(H\) satisfies the strong bunkbed conjecture, and by \textbf{Theorem 1}, \(G\) satisfies it as well.
\end{proof}
\begin{prop}
Every cycle graph \(C_n\) satisfies the strong bunkbed conjecture. \emph{(}\(n \geq 3\)\emph{)}\vspace{6pt}
\end{prop}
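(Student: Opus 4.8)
The plan is to prove the statement by strong induction on $n \geq 3$, combining \textbf{Proposition 6} with the three technical lemmas: \textbf{Lemma A} to discard degenerate values of $T$, \textbf{Lemma B} to handle adjacent endpoints, \textbf{Lemma C} to handle the ``separating'' configurations, and a single series-reduction step to shrink the cycle in every remaining case. Throughout the argument \textbf{Lemma A} lets me assume $v \neq w$, $v, w \notin T$ and $\vert T\vert \geq 2$, since otherwise $\mathscr{B}(C_n, T, \vec{p}, v, w)$ holds immediately. For the base case $n = 3$ (and in fact whenever $v$ and $w$ happen to be adjacent) I would set $e^\ast := vw \in E(C_n)$ and note that $C_n \setminus e^\ast$ is a path, hence a tree; by \textbf{Proposition 6} it satisfies the strong bunkbed conjecture, so $\mathscr{B}(C_n \setminus e^\ast, T, (p_e)_{e \neq e^\ast}, v, w)$ holds, and \textbf{Lemma B} upgrades this to $\mathscr{B}(C_n, T, \vec{p}, v, w)$.

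For the inductive step, fix $n \geq 4$ and assume the claim for all shorter cycles. I would split according to whether every vertex of $C_n$ already lies in $T \cup \{v, w\}$. If it does not, pick a vertex $u \notin T \cup \{v, w\}$; since $u$ has degree $2$ on the cycle and carries no post, it can be suppressed by a \emph{series reduction}: deleting $u$ and joining its two neighbours $a, b$ (non-adjacent in $C_n$, so no multi-edge arises) by a new edge $ab$ of weight $p_{au}p_{ub}$ yields a cycle $C_{n-1}$ on $V(C_n) \setminus \{u\}$ together with a percolation vector $\vec{p}\,'$. The induction hypothesis applies to $C_{n-1}$, and it then remains only to observe that this reduction does not change the relevant connection probabilities.

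Verifying that last point is the main technical step. Because $u \notin T$, the copies $u_0$ and $u_1$ are joined by no post, so in $B^T(C_n)$ the vertex $u_0$ can only serve as a bridge between $a_0$ and $b_0$, and $u_1$ only between $a_1$ and $b_1$; these two bridges are present independently, each with probability $p_{au}p_{ub}$. This is exactly the distribution induced by the two copies $e_0, e_1$ of the new edge $ab$ in $B^T(C_{n-1}, \vec{p}\,')$. Hence the joint law of the connection events among $\{v_0, v_1, w_0, w_1\}$ is identical in $B^T(C_n, \vec{p})$ and in $B^T(C_{n-1}, \vec{p}\,')$ (note $v, w \neq u$), so $\mathscr{B}(C_n, T, \vec{p}, v, w) \Leftrightarrow \mathscr{B}(C_{n-1}, T, \vec{p}\,', v, w)$, and the latter holds by induction.

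In the remaining case every vertex lies in $T \cup \{v, w\}$, i.e. $T = V(C_n) \setminus \{v, w\}$. If $v$ and $w$ are adjacent I would finish exactly as in the base case, via \textbf{Lemma B} and \textbf{Proposition 6}. If they are not adjacent, then both neighbours of $w$ are distinct from $v$ and hence lie in $T$; taking $W := \{w\} \cup \{\text{the two neighbours of } w\}$ gives $v \notin W$, $w \in W$ and $\partial_{C_n} W \subseteq T$, so \textbf{Lemma C} yields $\mathbb{P}(v_0 \leftrightarrow w_0) = \mathbb{P}(v_0 \leftrightarrow w_1)$ and in particular $\mathscr{B}(C_n, T, \vec{p}, v, w)$. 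The one place where I expect to need genuine care is the series-reduction identity: one must check that suppressing a post-free degree-$2$ vertex really leaves \emph{every} connection probability among the four terminals unchanged, rather than merely the two probabilities being compared, so that the equivalence of the $\mathscr{B}$-statements is valid.
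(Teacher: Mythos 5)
Your proposal is correct, and it organizes the argument differently from the paper while drawing on the same toolbox. The paper argues in one shot, without induction: when \(v\) and \(w\) are adjacent it applies \textbf{Proposition 6} and \textbf{Lemma B} exactly as you do, and when they are not, it splits on whether \emph{both} arcs of the cycle between \(v\) and \(w\) contain a vertex of \(T\). If they do, it invokes \textbf{Lemma C} with \(W\) taken to be an entire arc-side of the cycle, \(W = \{x^{(n)},\dots,x^{(N)}, w, y^{(M)},\dots,y^{(m)}\}\); if one arc is \(T\)-free, it contracts that whole arc at once to a single chord \(e^\ast = vw\) of weight \(p_{vx^{(1)}}\cdots p_{x^{(N)}w}\) (justified by the same coupling observation you make: a post-free path of degree-\(2\) vertices can contribute only when all of its edges are present simultaneously) and then finishes with \textbf{Proposition 6} and \textbf{Lemma B} on the shortened cycle. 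Your strong induction with single-vertex series reduction subsumes the paper's one-shot contraction, and your \textbf{Lemma C} window \(W = \{w\} \cup \{\text{neighbours of } w\}\) is local rather than half the cycle; both choices satisfy the hypotheses, since in your saturated case \(T = V(C_n)\setminus\{v,w\}\) the two neighbours of \(w\) indeed lie in \(T\). What each approach buys: your one-vertex-at-a-time reduction makes the coupling verification minimal and easy to check rigorously (a single post-free bridge vertex, with the two bridge indicators independent Bernoulli variables of parameter \(p_{au}p_{ub}\)), at the cost of running an induction; the paper's version avoids induction but must justify the contraction of a whole path in one step. Note also that you only need one direction of your equivalence, namely that \(\mathscr{B}(C_{n-1}, T, \vec{p}\,', v, w)\) implies \(\mathscr{B}(C_n, T, \vec{p}, v, w)\), so preserving the two compared probabilities \(\mathbb{P}(v_0 \leftrightarrow w_0)\) and \(\mathbb{P}(v_0 \leftrightarrow w_1)\) suffices; your stronger claim that the entire joint law of connections among the four terminals is preserved is true but more than required. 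One minor nitpick: \textbf{Lemma A} does not yield \(v \neq w\) --- that case is simply trivial because \(\mathbb{P}(v_0 \leftrightarrow v_0) = 1\), which is how the paper dispenses with it as well.
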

\begin{proof}
Consider a cycle graph \(C\), \(T \subseteq V(C)\), \(\vec{p} = (p_e)_{e\in E(C)} \in [0,1]^{E(C)}\) and \(v,w \in V(C)\), where we w.l.o.g. assume \(v \neq w\).  Consider a random graph \(B^T(C) \vcentcolon= B^T(C,\vec{p})\) as usual.\vspace{6pt}\\
If \(e^\ast \vcentcolon= vw \in E(C)\), then the graph \(C\setminus e^\ast\) is a (linear) tree graph. As such, we first apply \textbf{Proposition 6} to see that \(\mathscr{B}(C\setminus e^\ast, T, (p_e)_{e\in E(C)\setminus \{e^\ast\}}, v, w)\) holds, and then we apply \textbf{Lemma B} to see that \(\mathscr{B}(C, T, \vec{p}, v, w)\) holds as well.\vspace{6pt}\\
If \(vw \notin E(C)\), for some \(N, M \in \mathbb{N}\), we let \(v\), \(x^{(1)}\), ... , \(x^{(N)}\), \(w\) and \(v\), \(y^{(1)}\), ... , \(y^{(M)}\), \(w\) denote the vertices of the only two self-avoiding paths from \(v\) to \(w\) in \(C\). In case there exist \(n \in \{1\), ... , \(N\}\) and \(m\in\{1\), ... , \(M\}\) such that \(x^{(n)}, y^{(m)} \in T\), we can use \textbf{Lemma C} for the set \(W \vcentcolon= \{x^{(n)}\), ... , \(x^{(N)}\), \(w\), \(y^{(M)}\), ... , \(y^{(m)}\}\) and conclude that \(\mathscr{B}(C, T, \vec{p}, v, w)\) holds. Therefore, we may assume that \(\{x^{(1)}\text{, ... , }x^{(N)}\}\text{ } \cap \text{ }T = \emptyset\) \text{ }or \(\{y^{(1)}\text{, ... , }y^{(M)}\} \text{ }\cap\text{ } T= \emptyset\). W.l.o.g., assume the former. Then, for fixed \(i, j \in \{0,1\}\), the only way that any of the edges \(v_ix_i^{(1)}\), \text{ }\(x_i^{(1)}x_i^{(2)}\), ... , \(x_i^{(N-1)}x_i^{(N)}\), \text{ }\(x_i^{(N)}w_i\)\text{ } can contribute to a self-avoiding path in \(B^T(C)\) from \(v_0\) to \(w_j\) is if all of them are present at once. Thus, we may think of these edges as just one single big edge between \(v_i\) and \(w_i\) which, by inheritance, is still present independently of all the other edges with probability \(p_{vw} \vcentcolon= p_{vx^{(1)}}p_{x^{(1)}x^{(2)}}\cdot\cdot\cdot p_{x^{(N-1)}x^{(N)}}p_{x^{(N)}w}\). Formally, if \(\widetilde{C}\) denotes the cycle constructed by deleting the vertices \(x^{(1)}\), ... , \(x^{(N)}\) and all edges incident to them from \(C\) and then adding the edge \(e^\ast \vcentcolon=vw\), then \textbf{Proposition 6} again shows that \(\mathscr{B}(\widetilde{C}\setminus e^\ast, T, (p_e)_{e\in  E(\widetilde{C})\setminus\{e^\ast\}}, v, w)\) holds and \textbf{Lemma B} shows that \(\mathscr{B}(\widetilde{C}, T, (p_e)_{e\in E(\widetilde{C})}, v, w)\) holds. Therefore, if the random graph \(B^T(\widetilde{C}) \vcentcolon= B^T(\widetilde{C}, (p_e)_{e\in E(\widetilde{C})})\) is constructed by letting horizontal copies of edges \(e \in E(\widetilde{C})\setminus \{e^\ast\}\) be present if and only if they are present in \(B^T(C)\), letting \(e^\ast_i =v_iw_i\) be present if and only if \(v_ix_i^{(1)}\), \text{ }\(x_i^{(1)}x_i^{(2)}\), ... , \(x_i^{(N-1)}x_i^{(N)}\), \text{ }\(x_i^{(N)}w_i\) are all present in \(B^T(C)\) and letting posts \(u_0u_1\) be present if and only if \(u \in T\) \(\bigl(\subseteq V(\widetilde{C})\bigr)\), we have \begin{center}
\(\mathbb{P}\bigl( v_0 \overset{B^T(C)}{\longleftrightarrow} w_0\bigr) \text{ }\text{ }=\text{ }\text{ }\mathbb{P}\bigl( v_0 \overset{B^T(\widetilde{C})}{\longleftrightarrow} w_0\bigr) \text{ }\text{ } \geq \text{ }\text{ }\mathbb{P}\bigl( v_0 \overset{B^T(\widetilde{C})}{\longleftrightarrow} w_1\bigr)\text{ }\text{ }=\text{ }\text{ } \mathbb{P}\bigl( v_0 \overset{B^T(C)}{\longleftrightarrow} w_1\bigr)\)
\end{center}\vspace{3pt}
by the coupling argument from above, concluding the proof.\end{proof}
We have found some first non-trivial graphs that we can apply our glueing technique to, and by glueing together copies of \(K_2\) and cycles \(C_n\) for (possibly different) \(n \in \mathbb{N}\), \(n\geq3\), we exactly construct a class of graphs that is already studied in other fields of graph theory:\vspace{2pt}
\begin{prop}
Any connected graph \(G\) is a cactus graph if and only if for every block \(H \in \mathbb{B}(G)\), either \(H \cong K_2\) or \(H \cong C_n\) for some \(n = n(H)\geq 3\).\vspace{6pt} 
\end{prop}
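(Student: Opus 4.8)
The plan is to prove both implications through two structural facts about blocks, each of which follows directly from the union-of-biconnected-subgraphs description of a block given in the footnote above. I would first record these as auxiliary observations. (i) \emph{Every cycle \(C\) in \(G\) is contained in a single block.} Since a cycle graph is biconnected and connected, and since the block containing any fixed edge \(e \in E(C)\) is precisely the union of all biconnected subgraphs of \(G\) containing \(e\), the whole of \(C\) — being one such biconnected subgraph through \(e\) — is a subgraph of that block. (ii) \emph{Any two distinct blocks share at most one vertex.} To see this, I would show that whenever two biconnected subgraphs \(H_1, H_2\) satisfy \(\vert V(H_1) \cap V(H_2)\vert \geq 2\), their union \(H_1 \cup H_2\) is again biconnected: for any \(x\), pick \(z \in V(H_1) \cap V(H_2)\) with \(z \neq x\); then \(H_1 \setminus x\) and \(H_2 \setminus x\) are each connected and both contain \(z\), so \((H_1 \cup H_2)\setminus x\) is connected. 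Maximality of blocks then forces \(H_1 = H_2\), so distinct blocks cannot overlap in two or more vertices.

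For the direction \textbf{"}every block is \(K_2\) or a cycle \(\Rightarrow G\) is a cactus\textbf{"}, I would argue as follows. Let \(C \neq \widetilde{C}\) be two cycles in \(G\). By (i), \(C\) lies in some block \(H\); since \(H\) contains a cycle it is not \(\cong K_2\), so \(H \cong C_m\) for some \(m\). The only cycle subgraph of a cycle graph \(C_m\) is \(C_m\) itself, because deleting any edge of \(C_m\) leaves a path, which is acyclic; hence \(C = H\). Thus every cycle in \(G\) \emph{equals} a block that is a cycle graph, so \(C\) and \(\widetilde{C}\) are two distinct (cyclic) blocks. By (ii) they share at most one vertex, which is exactly the cactus condition.

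For the converse \textbf{"}\(G\) a cactus \(\Rightarrow\) every block is \(K_2\) or a cycle\textbf{"}, let \(H \in \mathbb{B}(G)\) with \(H \not\cong K_2\), so \(H\) is biconnected with \(\vert V(H)\vert \geq 3\). First I would note that such an \(H\) has minimum degree \(\geq 2\) (a degree-\(1\) vertex \(y\) with neighbor \(z\) would make \(H\setminus z\) disconnected), so if every vertex had degree exactly \(2\), connectedness would force \(H \cong C_n\) and we would be done. Hence, assuming \(H\) is not a cycle, there is a vertex \(x\) with three distinct neighbours \(a, b, c\). Since \(H\) is biconnected, \(H\setminus x\) is connected; I would take a spanning tree \(T\) of \(H\setminus x\) and let \(m\) be the median of \(a, b, c\) in \(T\), so the \(T\)-paths from \(m\) to \(a\), \(b\), \(c\) are internally disjoint. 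Prepending the edges \(xa\), \(xb\), \(xc\) yields three internally disjoint \(x\)–\(m\) paths \(\pi_a, \pi_b, \pi_c\) (a theta subgraph). The two cycles \(\pi_a \cup \pi_b\) and \(\pi_a \cup \pi_c\) are distinct yet both contain the edge \(xa\), contradicting the cactus property that distinct cycles share no common edge. Therefore \(H \cong C_n\).

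The main obstacle is this final step of the converse: extracting from a non-cycle biconnected graph a genuine theta configuration — three internally disjoint paths between two vertices — that produces two distinct cycles with a common edge. The degree-\(\geq 3\) reduction together with the tree-median argument is the cleanest route I see; an equivalent alternative would be to invoke an open ear decomposition of the \(2\)-connected graph \(H\) and apply the same theta argument to the first ear attached to the initial cycle.
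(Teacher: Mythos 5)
Your proof is correct, but it takes a genuinely different route from the paper's in both directions. For "cactus \(\Rightarrow\) blocks are \(K_2\) or cycles", the paper first shows every edge of a non-trivial block \(H\) lies on a cycle, writes \(H\) as a union of cycles \(C^{(1)}, \dots, C^{(m)}\), and derives a contradiction for \(m \geq 2\) by splicing a connecting path \(\eta\) (avoiding the edges of \(C^{(1)}\) and \(C^{(k)}\)) into a new cycle meeting \(C^{(1)}\) in two vertices; you instead run the degree dichotomy and extract a theta subgraph via the spanning-tree median, which is arguably the more standard and robust argument — it isolates the reusable fact that every \(2\)-connected non-cycle graph contains a theta. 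One small remark there: you appeal to the edge-disjointness characterization of cacti, which the paper only asserts (without proof) in a footnote as equivalent to its primary definition; but this is harmless, since your two cycles \(\pi_a \cup \pi_b\) and \(\pi_a \cup \pi_c\) both contain the two distinct vertices \(x\) and \(m\), so the primary condition \(\vert V(C) \cap V(\widetilde{C})\vert \leq 1\) is violated directly and the footnote is not needed. For the converse, the paper argues by contraposition — two cycles sharing two vertices have a biconnected union sitting inside a block with a vertex of degree \(\geq 3\), which thus cannot be \(K_2\) or \(C_n\) — whereas you prove two auxiliary lemmas (every cycle lies in a single block; distinct blocks share at most one vertex, via the union-is-biconnected argument) and then observe that each cycle must \emph{equal} its block. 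Your lemma (ii) is a structural fact the paper never states explicitly, and your proof of it is sound, including the choice of a shared vertex \(z \neq x\) guaranteed by \(\vert V(H_1) \cap V(H_2)\vert \geq 2\); maximality then correctly collapses \(H_1 = H_1 \cup H_2 = H_2\). Both approaches are complete; yours is slightly longer in setup but cleaner at the contradiction step, since the paper's construction of the path \(\eta\) is the more delicate point of its argument.
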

\begin{proof}
"\(\Leftarrow\)\text{": } (Contraposition.) Let \(G\) be connected and not a cactus. Then, there must exist two cycles \(C \neq \widetilde{C}\) in \(G\) that overlap in at least two different vertices \(x \neq y\). The graph \(H\) given by \(V(H) \vcentcolon=V(C)\) \(\cup\) \(V(\widetilde{C})\) and \(E(H)\vcentcolon= E(C)\) \(\cup\) \(E(\widetilde{C})\) is biconnected and therefore a subgraph of a block \(\widetilde{H}\) of \(G\). Also, since \(C \neq \widetilde{C}\), \(x\) can be chosen to have degree \(\geq 3\) in \(H\). In particular, \(x\) has degree \(\geq 3\) in \(\widetilde{H}\), so \(\widetilde{H}\) cannot be isomorphic to either \(K_2\) or any \(C_n\).\pagebreak\\
"\(\Rightarrow\)\text{": } Consider a cactus \(G\) and a block \(H\) of \(G\), \(\vert V(H)\vert \geq 2\). If \(\vert V(H)\vert = 2\), then \(H\) is isomorphic to \(K_2\). If \(\vert V(H)\vert \geq 3\), then it is easy to see that every vertex \(x \in V(H)\) must have atleast two neighbours, because if it had only one neighbour \(y\), then \(x\) would be isolated in \(H\setminus y\), and since \(\vert V(H\setminus y)\vert \geq 2\), \(H\setminus y\) would be disconnected, which is not possible since \(H\) is biconnected. Now, consider an arbitrary edge \(e =vw\in E(H)\). By the argument above, \(v\) must have another neighbour \(u \in V(H)\setminus\{v,w\}\). Since \(H\setminus v\) is still connected, there exists a self-avoiding path from \(u\) to \(w\) in \(H\setminus v\), and together with the edges \(uv\) and \(vw\) this path forms a cycle containing \(e = vw\). Thus, every edge of \(H\) is contained in a cycle in \(H\), and therefore, every vertex of \(H\) is contained in a cycle in \(H\) as well, because \(H\) is connected and has several vertices. Hence, for \(m \in \mathbb{N}\) different cycles \(C^{(1)}\), ... , \(C^{(m)}\) in \(H\), we have \(V(H) = V(C^{(1)})\) \(\cup\) ... \(\cup\) \(V(C^{(m)})\) and \(E(H) = E(C^{(1)})\) \(\cup\) ... \(\cup\) \(E(C^{(m)})\). We want to show that \(m = 1\). Suppose \(m\geq2\). Since \(H\) is connected, there must exist another cycle \(C^{(k)}\), \(k\geq 2\), such that \(C^{(1)}\) and \(C^{(k)}\) overlap in a common vertex \(z \in V(C^{(1)}) \cap V(C^{(k)})\). Since \(G\) is a cactus, \(C^{(1)}\) and \(C^{(k)}\) share no vertex other than \(z\), but since \(H\setminus z\) is still connected, we know that there must exist a self-avoiding path \(\eta\) in \(H\) that only leads across edges \(e \in E(H)\setminus(E(C^{(1)})\cup E(C^{(k)}))\) and connects a vertex \(z' \neq z\) from \(C^{(1)}\) with a vertex \(z''\neq z\) from \(C^{(k)}\). If we combine this path with a self-avoiding path \(\gamma\) from \(z\) to \(z'\) in \(C^{(1)}\) and a self-avoiding path \(\xi\) from \(z''\) to \(z\) in \(C^{(k)}\), then \(\gamma\) \(\circ\) \(\eta\) \(\circ\) \(\xi\) forms a cycle that shares the two vertices \(z\) and \(z'\) with \(C^{(1)}\) but is not identical to \(C^{(1)}\), because it contains \(z''\), contradicting the assumption that \(G\) is a cactus. Hence, we conclude \(m = 1\) and as such, \(H = C^{(1)}\).
\end{proof}\vspace{1pt}
\begin{proof}(Of \textbf{Theorem 2}.)\vspace{1pt}\\
Let \(G\) be a cactus graph. By \textbf{Theorem 1}, we need to show that any \(H \in\mathbb{B}(G)\) satisfies the strong bunkbed conjecture. By \textbf{Proposition 8}, we have either \(H \cong K_2\) or \(H\cong C_n\) for some \(n\geq 3\). By \textbf{Example 5} or \textbf{Proposition 7}, \(H\) satisfies the strong bunkbed conjecture.
\end{proof}\vspace{6pt}
Out of all the graphs that are not cactus graphs, the \emph{diamond graph} \(D\) is the smallest one in the sense that it has both the fewest vertices as well as the fewest edges. (We shall assign a deeper meaning to this notion of \(D\) being the "smallest" such graph later in this section.)\vspace{4pt}
\begin{center}
\includegraphics[scale=0.15]{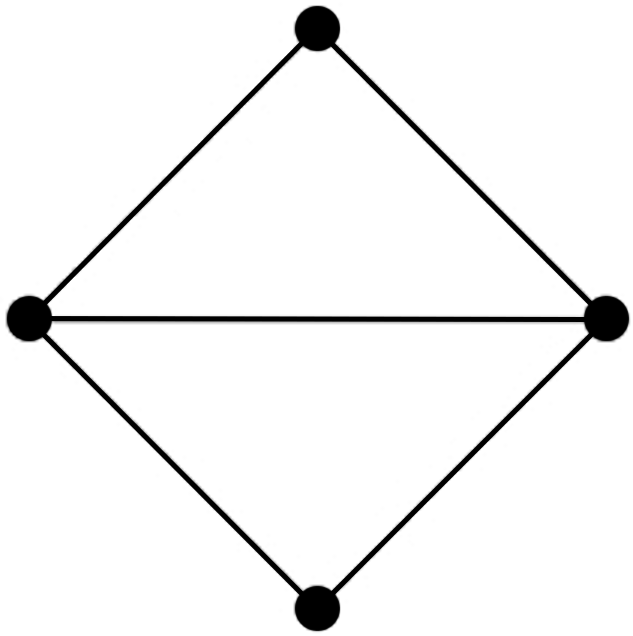}\vspace{6pt}\\
\emph{Fig. 4:\text{ } The diamond graph}\vspace{6pt}
\end{center}
\begin{prop}
The diamond graph satisfies the strong bunkbed conjecture.\vspace{6pt}
\end{prop}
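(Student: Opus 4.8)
The plan is to settle the statement by a short case analysis that feeds each case into one of the auxiliary lemmas together with the already-established cases of cycles and cacti. Write $V(D) = \{a,b,c,d\}$, where $b$ and $d$ are the two vertices of degree $3$ (joined by the shared edge $bd$) and $a,c$ are the two vertices of degree $2$, each adjacent to both $b$ and $d$; thus $E(D) = \{ab,ad,bd,bc,cd\}$ and $ac \notin E(D)$. Fix $T \subseteq V(D)$, $\vec{p} \in [0,1]^{E(D)}$ and $v,w \in V(D)$; we may assume $v \neq w$, and by \textbf{Lemma A} we may further assume $v,w \notin T$ and $\vert T\vert \geq 2$. Using the automorphisms of $D$ (generated by the transpositions $a \leftrightarrow c$ and $b \leftrightarrow d$, and applied simultaneously to $v$, $w$, $T$ and $\vec{p}$), it suffices to treat the three representative unordered pairs $\{v,w\} \in \{\{b,d\},\{a,b\},\{a,c\}\}$.

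For the two adjacent pairs I would apply \textbf{Lemma B} with $e^\ast := vw$, which reduces $\mathscr{B}(D,T,\vec{p},v,w)$ to $\mathscr{B}\bigl(D\setminus e^\ast, T, (p_e)_{e\in E(D)\setminus\{e^\ast\}}, v, w\bigr)$. If $\{v,w\} = \{b,d\}$, then $D\setminus bd$ is a $4$-cycle (cyclic order $a,b,c,d$), which satisfies the strong bunkbed conjecture by \textbf{Proposition 7}. If $\{v,w\} = \{a,b\}$, then $D\setminus ab$ is a triangle $bcd$ with a pendant edge $ad$, i.e.\ a cactus graph, which satisfies the strong bunkbed conjecture by \textbf{Theorem 2} (equivalently, by \textbf{Theorem 1} together with \textbf{Proposition 7} and \textbf{Example 5}). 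In either subcase the reduced graph satisfies $\mathscr{B}$ for all vertices and all post-sets, so in particular the hypothesis of \textbf{Lemma B} holds, and $\mathscr{B}(D,T,\vec{p},v,w)$ follows.

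For the remaining non-adjacent pair $\{v,w\} = \{a,c\}$, the assumption $v,w \notin T$ forces $T \subseteq \{b,d\}$, and $\vert T\vert \geq 2$ then forces $T = \{b,d\}$. This is exactly the vertex separator of $a$ and $c$ in $D$, which is what makes \textbf{Lemma C} applicable: taking $W := \{b,c,d\}$ gives $v = a \notin W$, $w = c \in W$ and $\partial_D W = \{b,d\} \subseteq T$, so with $B^T := B^T(D,\vec{p})$ \textbf{Lemma C} yields $\mathbb{P}\bigl(a_0 \overset{B^T}{\longleftrightarrow} c_0\bigr) = \mathbb{P}\bigl(a_0 \overset{B^T}{\longleftrightarrow} c_1\bigr)$, and in particular $\mathscr{B}(D,\{b,d\},\vec{p},a,c)$ holds. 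This exhausts all cases.

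The only step that requires a genuine idea is the last one: unlike for the adjacent pairs, deleting a single edge never turns $D$ into a cactus, so \textbf{Lemma B} is unavailable for $\{a,c\}$. The observation that rescues this case is that after the \textbf{Lemma A} reduction the separating set $\{b,d\}$ is forced to lie entirely in $T$, so the posts over $b$ and $d$ are deterministically present and \textbf{Lemma C} forces the two connection probabilities to be equal. I expect this to be the main (and only mildly subtle) obstacle; everything else is routine bookkeeping with the symmetries of $D$ and the reductions already packaged in \textbf{Lemmas A} and \textbf{B}.
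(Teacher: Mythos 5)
Your proposal is correct and follows essentially the same route as the paper's proof: adjacent pairs are handled by \textbf{Lemma B} after noting that $D\setminus vw$ is a cactus (either $C_4$ or a triangle with a pendant edge) covered by \textbf{Proposition 7} and \textbf{Theorem 2}, while the non-adjacent degree-$2$ pair is reduced via \textbf{Lemma A} to the single case $T = \{b,d\} = \partial_D W$ and settled by \textbf{Lemma C}. The only cosmetic differences are your explicit use of the automorphism group of $D$ where the paper argues by vertex degrees, and your applying the \textbf{Lemma A} reduction up front rather than only in the non-adjacent case.
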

\begin{proof}
Let \(T \subseteq V(D)\), \(\vec{p} = (p_e)_{e\in E(D)}\in [0,1]^{E(D)}\) and \(v,w \in V(D)\). W.l.o.g., assume \(v \neq w\). Note that \(D\) has two vertices of degree \(2\) and two vertices of degree \(3\).\vspace{4pt}\\
If at least one of the two vertices \(v\) and \(w\) is of degree \(3\), then \(e^\ast \vcentcolon= vw \in E(D)\) and \(D\setminus e^\ast\) is either isomorphic to the cycle graph \(C_4\) (if both \(v\) and \(w\) are of degree \(3\)) or it is isomorphic to \(K_2\) and \(K_3\) glued together at a single vertex (if one of the vertices \(v\) and \(w\) is of degree \(2\) and the other is of degree \(3\)). Either way, \(D\setminus e^\ast\) is a cactus, so \textbf{Theorem 2} shows that \(\mathscr{B}(D\setminus e^\ast, T, (p_e)_{e\in  E(D)\setminus\{e^\ast\}}, v, w)\) holds, and \textbf{Lemma B} shows that \(\mathscr{B}(D, T, \vec{p}, v, w)\) holds.\vspace{4pt}\\
If both \(v\) and \(w\) are of degree \(2\), then \(W \vcentcolon= V(D)\setminus\{v\}\) has the border \(\partial_DW = V(D)\setminus\{v,w\}\). By using \textbf{Lemma A}, assume that \(v, w \notin T\) and \(\vert T\vert \geq 2\), and since \(\vert V(D)\vert = 4\), the only choice left is \(T = V(D)\setminus\{v,w\} = \partial_DW\). \textbf{Lemma C} shows that \(\mathscr{B}(D, T, \vec{p}, v, w)\) holds.
\end{proof}
\begin{prop}
The complete graph \(K_4\) satisfies the strong bunkbed conjecture.\\
In particular, all graphs with at most four vertices satisfy the strong bunkbed conjecture.\vspace{6pt}
\end{prop}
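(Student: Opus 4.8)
The plan is to reduce the statement about $K_4$ to the already-established strong bunkbed conjecture for the diamond graph via \textbf{Lemma B}, and then to settle the general ``at most four vertices'' claim by invoking \textbf{Theorem 1} together with a short enumeration of the small biconnected graphs.

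First I would treat $K_4$ directly. Fix $T \subseteq V(K_4)$, a percolation vector $\vec{p}$, and two vertices $v, w$, where without loss of generality $v \neq w$. Since $K_4$ is complete, the edge $e^\ast \vcentcolon= vw$ always belongs to $E(K_4)$, which is exactly the hypothesis needed to apply \textbf{Lemma B}. The key structural observation is that deleting any single edge from $K_4$ yields the diamond graph, i.e.\ $K_4 \setminus e^\ast \cong D$, with the two endpoints $v$ and $w$ of the removed edge becoming the two degree-$2$ vertices of $D$. By \textbf{Proposition 9}, the diamond graph satisfies the strong bunkbed conjecture, so in particular $\mathscr{B}(K_4\setminus e^\ast, T, (p_e)_{e\in E(K_4)\setminus\{e^\ast\}}, v, w)$ holds. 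Feeding this into \textbf{Lemma B} immediately yields $\mathscr{B}(K_4, T, \vec{p}, v, w)$, which completes the first part.

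For the ``in particular'' statement I would argue as follows. By \textbf{Theorem 1}, a graph satisfies the strong bunkbed conjecture if and only if each of its blocks does, and every block of a graph on at most four vertices is itself a biconnected graph on at most four vertices. It therefore suffices to check every biconnected graph on at most four vertices. Since a biconnected graph has minimum degree at least $2$, a short case analysis by number of vertices shows that the complete list is $K_2$, $C_3$, $C_4$, the diamond $D$, and $K_4$: on two vertices only $K_2$ occurs; on three vertices only $C_3$; and on four vertices the minimum-degree-$2$ condition forces $4$, $5$, or $6$ edges, giving exactly $C_4$, $D$, and $K_4$. Each of these satisfies the strong bunkbed conjecture --- $K_2$ by \textbf{Example 5}, the cycles $C_3$ and $C_4$ by \textbf{Proposition 7}, the diamond $D$ by \textbf{Proposition 9}, and $K_4$ by the first part of this proposition --- so the claim follows from \textbf{Theorem 1}.

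I do not expect a genuine obstacle here, since the statement is essentially a corollary of the machinery already built. The only point requiring care is the enumeration step: I would need to confirm that the list of biconnected graphs on four vertices is exhaustive --- in particular that no other four- or five-edge configuration is biconnected --- as this is the one place where an overlooked case could slip through. The observation that makes the whole argument collapse to a near one-line application of \textbf{Lemma B} is the recognition that $K_4 \setminus e$ is precisely the diamond graph.
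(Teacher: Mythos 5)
Your proof is correct and follows essentially the same route as the paper: for the $K_4$ part, the paper likewise fixes $v \neq w$, notes $e^\ast \vcentcolon= vw \in E(K_4)$ and $K_4 \setminus e^\ast \cong D$, and combines \textbf{Proposition 9} with \textbf{Lemma B}. For the ``in particular'' clause the paper gives no explicit argument, and your enumeration of the biconnected graphs on at most four vertices ($K_2$, $C_3$, $C_4$, $D$, $K_4$) together with \textbf{Theorem 1} is a valid and careful way to fill in exactly what the paper leaves implicit.
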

\begin{proof}
Let \(T \subseteq V(K_4)\), \(\vec{p} = (p_e)_{e\in E(K_4)}\in [0,1]^{E(K_4)}\) and \(v,w \in V(K_4)\), w.l.o.g. \(v\neq w\). Since \(K_4\) is complete, we have \(e^\ast \vcentcolon= vw \in E(K_4)\) again. Since \(K_4\setminus e^\ast \cong D\), we apply \textbf{Proposition 9} to show that \(\mathscr{B}(K_4\setminus e^\ast, T, (p_e)_{e\in  E(K_4)\setminus\{e^\ast\}}, v, w)\) holds and \textbf{Lemma B} to show that \(\mathscr{B}(K_4, T, \vec{p}, v, w)\) holds.
\end{proof}
This yields a stronger version of \textbf{Theorem 2}:
\begin{thm}
Let \(G\) be a graph such that every biconnected component of \(G\) is either a cycle graph or has at most four vertices.\\
\emph{(}I.e. \(\forall H\in\mathbb{B}(G)\)\emph{:} \(H \cong K_2\) or \(H\cong D\) or \(H\cong K_4\) or \(H\cong C_n\) for some \(n = n(H) \geq 3\).\emph{)}\\
Then \(G\) satisfies the strong bunkbed conjecture.\vspace{6pt}
\end{thm}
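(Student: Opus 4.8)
The plan is to invoke \textbf{Theorem 1} to reduce the claim to the blocks of \(G\), and then to dispatch each possible block using one of the results already established. First I would note that by \textbf{Theorem 1}, \(G\) satisfies the strong bunkbed conjecture if and only if every block \(H\in\mathbb{B}(G)\) does, so it suffices to verify the conjecture for each individual block. By hypothesis, every such \(H\) is either a cycle graph \(C_n\) or a biconnected graph on at most four vertices.

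Next I would classify the biconnected graphs on at most four vertices in order to justify the enumeration given in the statement. On two vertices the only biconnected graph is \(K_2\); on three vertices biconnectivity forces \(C_3\cong K_3\); and on four vertices the biconnected graphs are exactly \(C_4\), the diamond \(D\), and \(K_4\). Thus every block is isomorphic to one of \(K_2\), \(C_3\), \(C_4\), \(D\), \(K_4\), or some \(C_n\) with \(n\geq 3\), matching the list in the theorem.

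Finally I would apply the appropriate earlier result in each case: \(K_2\) is handled by \textbf{Example 5}, every cycle \(C_n\) (including \(C_3\) and \(C_4\)) by \textbf{Proposition 7}, the diamond \(D\) by \textbf{Proposition 9}, and \(K_4\) by \textbf{Proposition 10}. Since each block then satisfies the strong bunkbed conjecture, \textbf{Theorem 1} yields the conclusion for \(G\).

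Since all the substantial work has already been carried out in the preceding propositions and in \textbf{Theorem 1}, there is no genuine obstacle here; the only point requiring a little care is the verification that biconnectivity restricts the four-vertex blocks to exactly \(C_4\), \(D\), and \(K_4\) (a four-vertex graph containing a degree-one vertex, or a path, fails to be biconnected, so such graphs never arise as blocks and need not be treated separately).
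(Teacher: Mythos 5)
Your proposal is correct and follows essentially the same route as the paper: the paper's proof is the one-line citation of \textbf{Theorem 1}, \textbf{Proposition 7} and \textbf{Proposition 10}, where the "in particular" clause of \textbf{Proposition 10} (all graphs with at most four vertices satisfy the strong bunkbed conjecture, since missing edges can be assigned weight \(p_e = 0\)) subsumes your separate appeals to \textbf{Example 5} and \textbf{Proposition 9} for \(K_2\) and \(D\). Your explicit classification of the biconnected graphs on at most four vertices as \(K_2\), \(C_3\), \(C_4\), \(D\), \(K_4\) is accurate and merely makes explicit what the paper's parenthetical "I.e." line asserts.
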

\begin{proof}
This follows from \textbf{Theorem 1}, \textbf{Proposition 7} and \textbf{Proposition 10}.
\end{proof}
\noindent While we focused on the strong version of the bunkbed conjecture so far, \textbf{Theorem 1} also works for the weak version, which was considered by other authors before:
\begin{thm}
All complete graphs, edge differences of a complete graph and a complete subgraph, complete bipartite graphs and symmetric complete \(k\)-partite graphs satisfy the weak bunkbed conjecture.\vspace{6pt}
\end{thm}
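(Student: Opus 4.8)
The plan is to derive the theorem by assembling the relevant positive results already available in the literature, treating each of the four graph families in turn; no genuinely new percolation estimate is required. Before citing anything I would record the unifying observation that all four families are complete multipartite graphs: a complete graph $K_n$ is complete $n$-partite with singleton parts, a complete bipartite graph is complete $2$-partite, a symmetric complete $k$-partite graph is $K_{m,\dots,m}$, and an edge difference $K_n\setminus E(K_m)$ of a complete graph and a complete subgraph is complete multipartite with one part of size $m$ and all remaining parts of size $1$. This clarifies that the theorem asserts the weak bunkbed conjecture not for all complete multipartite graphs (which remains out of reach) but precisely for these four prescribed sub-families, so that the task splits cleanly into four separately known cases.

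For the complete graphs and the complete bipartite graphs I would invoke \cite{van2019bunkbed}, where the weak bunkbed conjecture is established for these families. For the symmetric complete $k$-partite graphs and for the edge differences of a complete graph and a complete subgraph I would appeal to \cite{richthammer2022bunkbed}; the edge-deletion and symmetrisation arguments used there are exactly the ones underlying \textbf{Lemma B} above, and they cover both remaining families. As every listed family falls under one of these two references, combining them yields the theorem.

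Since each family is thus already settled, the proof carries no hard analytic step, and the only point that genuinely demands care --- the main, if mild, obstacle --- is reconciling conventions. The present paper encodes the posts through an arbitrary deterministic subset $T\subseteq V(G)$, whereas a given source may instead open the posts independently at random or fix them on a transversal. I would bridge any such discrepancy with the standard fact that $\mathscr{B}(G,T,(p)_{e\in E(G)},v,w)$ holding for every deterministic $T$ is equivalent to the corresponding inequality for independently random posts --- one direction by averaging over $T$, the other by specialising the post-probabilities to the values $0$ and $1$. Once the formulations are matched in this way, the four cited results combine to give exactly the asserted statement, so that the entire difficulty of the theorem resides in the cited works rather than in the assembly.
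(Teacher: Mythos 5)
Your proposal is correct and takes essentially the same route as the paper, whose entire proof of this theorem is the single remark ``For proofs we refer the interested reader to \cite{van2019bunkbed} and \cite{richthammer2022bunkbed}''; your convention-matching observation (all deterministic $T$ being equivalent, by averaging and by specialising post-probabilities to $0$ and $1$, to arbitrary independent random posts) is sound and merely makes explicit what the paper leaves implicit. One minor bookkeeping point: \cite{van2019bunkbed} treats only the complete graphs, while the complete bipartite, symmetric complete $k$-partite and edge-difference cases are all due to \cite{richthammer2022bunkbed}, but since the paper cites the two sources jointly this misattribution is harmless.
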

For proofs we refer the interested reader to \cite{van2019bunkbed} and \cite{richthammer2022bunkbed}.\vspace{1pt}
\begin{proof}(Of \textbf{Theorem 3}.)\vspace{1pt}\\
This follows from \textbf{Theorem 1}, \textbf{Proposition 7} and \textbf{Theorem 12}.\end{proof}\vspace{2pt}
\noindent We have proved the two versions of the bunkbed conjecture for all graphs with somewhat "nice\text{" }blocks, where the meaning of that word depends on the version of the conjecture. \textbf{Theorem 11} states that all graphs whose blocks are either cycles or have at most four vertices satisfy the strong bunkbed conjecture, and connected such graphs are exactly cactus graphs where any of the missing chords may have been added to cycles \(C \cong C_4\). We will describe similar classes of graphs in a different way in a moment.\vspace{4pt}\\
Note that if we can show that the complete graph \(K_n\) satisfies the strong bunkbed conjecture for some \(n \in \{5\), ... , \(7221\}\), we may replace the words "at most four" in \textbf{Theorem 11} by "at most \(n\)", and similarly we could then add biconnected graphs with at most \(n\) vertices to the list of allowed blocks in \textbf{Theorem 3}. Also note that this is impossible for \(n \geq 7222\), because the counterexample to the bunkbed conjecture given in \cite{gladkov2024bunkbed} has \(7222\) vertices, so in particular, \(K_{7222}\) (and all complete graphs \(K_n\) for \(n \geq 7223\)) must not satisfy the strong bunkbed conjecture.\vspace{4pt}\\
Finally, we prove restraints a graph must succumb to in order to have a chance to violate the strong bunkbed conjecture and demonstrate how stronger versions of \textbf{Proposition 10} for bigger complete graphs \(K_n\), \(n\geq 5\), might strengthen these restraints in the future.
\begin{proof}(Of \textbf{Theorem 4}.)\vspace{1pt}\\
By \textbf{Theorem 11}, any counterexample to the strong bunkbed conjecture has a biconnected component with more than four vertices that is not a cycle. It is easy to see that any biconnected graph with more than four vertices that is not a cycle must contain one of the two graphs given in \textbf{Theorem 4} as a minor, immediately yielding \textbf{Theorem 4}.\end{proof}
\noindent We now show how the two graphs from \textbf{Theorem 4} exactly characterize the class of graphs whose blocks are either cycles or have at most four vertices, and that similar restraints on counterexamples to the strong bunkbed conjecture might indeed be proved likewise for other cases than \(n=4\). We want to apply the so called \emph{graph minor theorem} that was infamously proved over a course of 20 papers; see for example \cite{robertson2004graph}. Given a graph \(G\), any graph \(H\) (up to graph isomorphism) that is constructed by repeatedly deleting and/or contracting some sets of edges of \(G\) and then deleting a subset of the arising isolated vertices is called a \emph{minor} of \(G\). Note that we consider every graph \(G\) to be a minor of itself and that the minor relation is both transitive and anti-symmetric (up to graph isomorphism).\vspace{4pt}\\
Now, consider a class\footnote{As we only care about graphs up to graph isomorphism, we may w.l.o.g. view the vertices of any graph \(G\) as embedded in the natural numbers, making \(\mathbb{G}\) an actual set instead of a proper class.} \(\mathbb{G}\) of graphs. \(\mathbb{G}\) is said to be \emph{minor-closed} if and only if for every graph \(G \in \mathbb{G}\) and every minor \(H\) of \(G\): \(H \in \mathbb{G}\). A possible way of stating the graph minor theorem is that if \(\mathbb{G}\) is minor-closed, then it is characterized by a finite set \(\mathcal{H}\) of graphs such that for any given graph \(G\): \(G\in\mathbb{G}\) if and only if no \(H\in\mathcal{H}\) is a minor of \(G\). Typically, we choose \(\mathcal{H}\) to be a smallest set with this property. Then, the elements of \(\mathcal{H}\) are called the \emph{forbidden minors} of \(\mathbb{G}\) (up to graph isomorphism).\vspace{4pt}\\
For \(n\in\mathbb{N}\), \(n \geq 2\), let \(\mathbb{G}_n\) denote the class of all graphs \(G\) such that every block \(H\in \mathbb{B}(G)\) is either a cycle or has at most \(n\) vertices. For example, \(\mathbb{G}_2 = \mathbb{G}_3\) is the class of graphs whose connected components are cactus graphs, and \(\mathbb{G}_4\) is the class of graphs whose connected components are cactus graphs where some additional chords may have been added to cycles \(C\cong C_4\); i.e. the class of graphs discussed in \textbf{Theorem 11}.\vspace{4pt}\\
Since deleting or contracting an edge from a block can never make the block bigger or transform a block that is a cycle into one or more non-cycle and non-trivial blocks, \(\mathbb{G}_n\) is minor-closed for \(n\geq 2\). By the graph minor theorem, \(\mathbb{G}_n\) is characterized by a set \(\mathcal{H}_n\) of forbidden minors, where for instance \(\mathcal{H}_2 = \mathcal{H}_3 = \{D\}\). (See \cite{el1988complexity}.) Note that \(\mathcal{H}_n\) can be given explicitly for any explicit \(n\), although this will get complicated very fast.\vspace{4pt}\\
If we can show that the complete graph \(K_n\) satisfies the strong version of the bunkbed conjecture for some \(n\geq 2\), then by \textbf{Theorem 1} and \textbf{Proposition 7}, all graphs \(G \in \mathbb{G}_n\) satisfy the strong bunkbed conjecture, and as such, every counterexample to the strong bunkbed conjecture must contain some \(H\in\mathcal{H}_n\) as a minor. The case \(n = 4\) was dealt with in \textbf{Proposition 10}, and the forbidden minors are indeed given by (up to graph isomorphism)\vspace{1pt}
\begin{center}\includegraphics[scale=0.055]{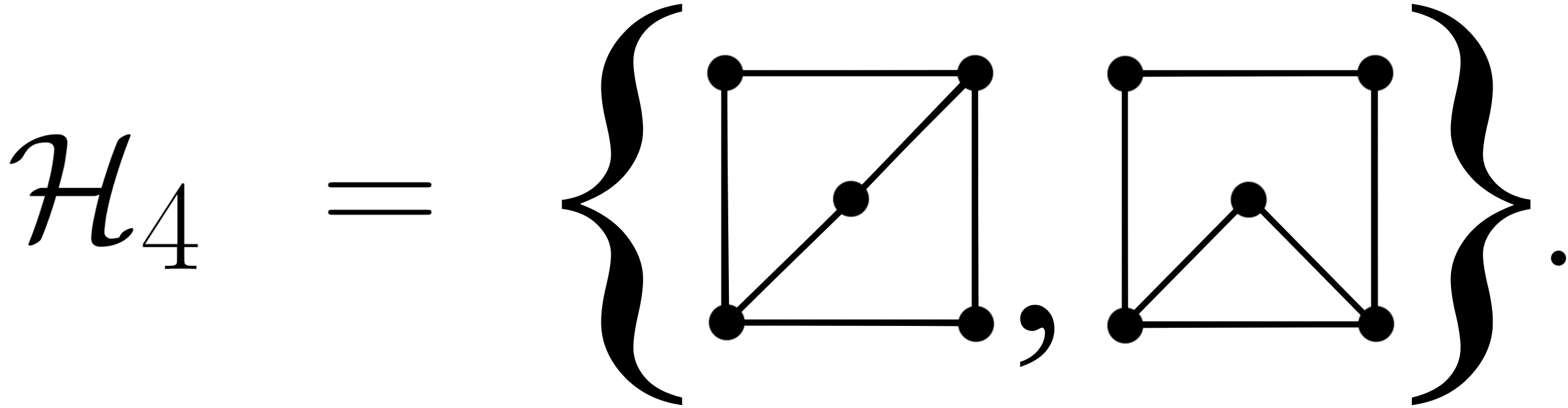}
\end{center}
As for a quick sketch of a proof of this, the reader may flesh out the following arguments:\vspace{4pt}\\
"\(\supseteq\)\text{":} Both graphs above are not elements of \(\mathbb{G}_4\) and therefore contain an element of \(\mathcal{H}_4\) as a minor, but the only such minors that are not elements of \(\mathbb{G}_4\) are the two graphs themselves.\vspace{4pt}\\
"\(\subseteq\)\text{":} Verify that every biconnected graph with at least five vertices that is not a cycle must contain one of the two graphs above as a minor. In particular, every \(H \in \mathcal{H}_4\) has a block that contains an \(H'\) as a minor where \(H'\) is one of the two graphs above, since \(H \notin \mathbb{G}_4\). Then, as \(H' \notin \mathbb{G}_4\), \(H'\) contains another \(H'' \in \mathcal{H}_4\) as a minor. By transitivity of the minor relation, \(H''\) is a minor of \(H\), and since \(\mathcal{H}_4\) is chosen to be minimal, it follows that \(H \cong H''\). (Otherwise, \(H\) would not be needed as a forbidden minor to characterize \(\mathbb{G}_4\).) Hence \(H\) is also a minor of \(H'\), and by anti-symmetry of the minor relation it follows that \(H \cong H'\).\vspace{4pt}\\
This reasoning shows that the assumptions about \(G\) in \textbf{Theorem 11} are precisely equivalent to \(G\) not containing any of the two minors \(H\in\mathcal{H}_4\) given in \textbf{Theorem 4}.\vspace{4pt}\\
We can also use this philosophy to conclude:\vspace{10pt}
\begin{thm}
There is a smallest, finite set \(\mathcal{H} \neq \emptyset\) of graphs such that any given graph satisfies the strong bunkbed conjecture if and only if it contains no element of \(\mathcal{H}\) as a minor. In particular, with respect to the minor relation, \(\mathcal{H}\) consists exactly of the smallest counterexamples to the strong bunkbed conjecture \emph{(}up to graph isomorphism\emph{)} and every \(H \in \mathcal{H}\) is biconnected and contains a non-trivial subdivision of \(D\) as a minor.\vspace{6pt}
\end{thm}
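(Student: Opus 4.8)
The plan is to identify the class $\mathbb{G}$ of all graphs satisfying the strong bunkbed conjecture, to show that $\mathbb{G}$ is minor-closed, and then to feed this into the graph minor theorem exactly as in the discussion preceding the statement. The heart of the argument is the minor-closedness, which I would verify by checking that membership in $\mathbb{G}$ is preserved under the three elementary minor operations. Deleting an edge $e$ is realized by the percolation weight $p_e = 0$, so that $B^T(G,\vec p)$ is distributed as $B^T(G\setminus e, \ldots)$ and $\mathscr{B}(G\setminus e,\ldots)$ becomes a special case of $\mathscr{B}(G,\ldots)$. Contracting an edge $e = xy$ (with merged vertex $z$) is realized by $p_e = 1$: then $x_0,y_0$ are always connected and so are $x_1,y_1$, so the gadget $\{x_i,y_i\}$ behaves like the single vertex $z_i$, and by putting either both $x,y\in T$ or both $x,y\notin T$ one reproduces the presence or absence of the post $z_0z_1$. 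Choosing $T$ on $G$ accordingly (and splitting any parallel edges created by the contraction by giving one copy weight $0$) shows that $\mathscr{B}(G/e,\ldots)$ follows from $\mathscr{B}(G,\ldots)$. Deleting isolated vertices changes no connection event and is immediate. Hence $\mathbb{G}$ is minor-closed.

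By the graph minor theorem, $\mathbb{G}$ is therefore characterized by a finite set of forbidden minors, and choosing this set to be minimal yields a unique (up to isomorphism) smallest set $\mathcal{H}$ whose members are exactly the minor-minimal graphs not in $\mathbb{G}$, i.e. the counterexamples to the strong bunkbed conjecture that are smallest with respect to the minor relation. This gives existence, finiteness, minimality, and the stated description of $\mathcal{H}$ in one stroke. Nonemptiness $\mathcal{H}\neq\emptyset$ follows because a counterexample exists at all: the $7222$-vertex counterexample from \cite{gladkov2024bunkbed} violates the weak conjecture, which is the special case of the strong conjecture with constant weights, so it violates the strong conjecture as well, whence $\mathbb{G}$ is not the class of all graphs.

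Next I would pin down the structure of the members of $\mathcal{H}$. For biconnectedness, suppose some $H\in\mathcal{H}$ were not biconnected; then $H$ has at least two blocks (or an isolated vertex), and by \textbf{Theorem 1} the fact that $H$ is a counterexample forces some block $H_0\in\mathbb{B}(H)$ to be a counterexample too. Such a block is a proper subgraph, hence a proper minor of $H$, contradicting the minor-minimality of $H$ (note $H_0\not\cong K_2$, since $K_2\in\mathbb{G}$ by \textbf{Example 5}). Thus every $H\in\mathcal{H}$ is biconnected; being a biconnected counterexample, it has at least five vertices by \textbf{Proposition 10} and is not a cycle by \textbf{Proposition 7}.

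Finally, for the claim about subdivisions of $D$: by \textbf{Theorem 4}, every counterexample, in particular every $H\in\mathcal{H}$, contains at least one of the two graphs of \textbf{Theorem 4} as a minor. Both of these graphs are themselves non-trivial subdivisions of $D$, since they are precisely the two ``theta graphs'' obtained from the path-length pattern $(1,2,2)$ of $D$ by subdividing once while keeping at most one path of length $1$: namely $K_{2,3}$, with pattern $(2,2,2)$, and the house graph $C_5$ together with a chord, with pattern $(1,2,3)$. By transitivity of the minor relation it follows that $H$ contains a non-trivial subdivision of $D$ as a minor. I expect the main obstacle to be the contraction step of the minor-closedness argument: one must check carefully that $p_e=1$ together with the two-for-one identification of the posts genuinely reproduces the distribution of $B^{T'}(G/e)$, and that parallel edges created by the contraction are handled so that the effective edge probabilities still match. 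The remaining steps are then bookkeeping layered on top of the already-established \textbf{Theorem 1}, \textbf{Proposition 7}, \textbf{Proposition 10} and \textbf{Theorem 4}.
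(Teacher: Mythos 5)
Your proposal is correct and follows essentially the same route as the paper's proof: minor-closedness of the class via $p_e=0$ (deletion) and $p_e=1$ (contraction), the graph minor theorem for existence and finiteness of $\mathcal{H}$, non-emptiness from the counterexample of \cite{gladkov2024bunkbed}, biconnectedness via the gluing result (you invoke \textbf{Theorem 1} on blocks where the paper argues directly with \textbf{Theorem 1b}, which is equivalent), and the subdivision claim via \textbf{Theorem 4}. Your explicit handling of the contraction step (matching posts of the merged vertex and killing parallel edges with weight $0$) fills in a detail the paper dismisses as easy, but does not change the argument.
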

\begin{proof}
Let \(\mathbb{G}\) denote the class of graphs that satisfy the strong bunkbed conjecture and let \(G\in \mathbb{G}\). It is easy to see that, as far as the conjecture is concerned, assigning certain edges \(e \in E(G)\) an edge weight of \(p_e = 0\) or \(p_e = 1\) corresponds to deleting or contracting these edges, respectively. Therefore, \(\mathbb{G}\) is minor-closed. (This is why a-priori, this theorem only works for the strong version of the conjecture.)  Now, the set \(\mathcal{H}\) is the set of forbidden minors of \(\mathbb{G}\) given by the graph minor theorem. Also, since the bunkbed conjecture is false, it follows that \(\mathcal{H} \neq \emptyset\), and some explicit element \(H \in \mathcal{H}\) can be found by searching the minors of the counterexample to the bunkbed conjecture given in \cite{gladkov2024bunkbed}. Obviously, since the elements of \(\mathcal{H}\) are the smallest counterexamples to the strong bunkbed conjecture, any \(H\in \mathcal{H}\) has to be connected. Moreover, if any \(H\in \mathcal{H}\) was not biconnected, there would exist an \(x \in V(H)\) such that \(H\setminus x\) is disconnected. In particular, for some \(m\geq 2\), there would exist a decomposition \(H = H_1\) \(\cup\)\text{ ... }\(\cup\) \(H_m\) such that \(H_1\)\text{, ... , }\(H_m\) are graphs with \(\vert V(H_i) \vert \geq 2\) that overlap only at the vertex \(x\), and as such, \(\vert V(H_i)\vert \leq \vert V(H) \vert -1\). Since \(H\) is a smallest counterexample, \(H_1\)\text{, ... , }\(H_m\) would have to satisfy the strong bunkbed conjecture, but by \textbf{Theorem 1b}, \(H\) could then not be a counterexample either. By \textbf{Theorem 4}, \(H\) must contain one of the two smallest non-trivial subdivisions of \(D\) as a minor.
\end{proof}
\printbibliography
\end{document}